\theoremstyle{plain}
\renewcommand{\theequation}{\arabic{section}.\arabic{equation}}
\renewcommand\thefigure{\thesection.\@arabic\c@figure}
\renewcommand\thetable{\thesection.\@arabic\c@table}
\newenvironment{theorem}{\begin{thm}} {\end{thm}}
\newtheorem{thm}{\bf Theorem}[section]
\newtheorem{cor}{\bf Corollary}[section]
\newtheorem{prop}{Proposition}[section]
\newtheorem{lmm}{\bf Lemma}[section]
\newenvironment{lemma}{\begin{lmm}}{\end{lmm}}
\theoremstyle{remark}
\newtheorem{rem}{Remark}[section]
\newcommand{\re}[1]{(\ref{#1})}
\def \ri {{\rm i}}
\def \af {\alpha}
\def \dnk {d_{n,k}^{\lambda}}
\begin{document}
\baselineskip 13pt
\bibliographystyle{plain}
\graphicspath{{./figs/}}

\title[Exponential Convergence of Gegenbauer Spectral Differentiation]
{On Exponential Convergence of Gegenbauer Interpolation and Spectral
Differentiation}
\author[Z. Xie, ~ L.   Wang~  $\&$~ X. Zhao~~] {Ziqing Xie${}^{1}$, \quad
  Li-Lian Wang${}^{2}$ \quad
and \quad    Xiaodan Zhao${}^{2}$}
\thanks{\noindent${}^{1}$ School of Mathematics and Computer Science, Guizhou Normal
University, Guiyang, Guizhou 550001, China; College of Mathematics and Computer Science, Hunan Normal University,
Changsha, Hunan 410081, China. The research of the author is partially supported by the
NSFC (10871066) and the Science and Technology Grant  of Guizhou Province  (LKS[2010]05). \\
\indent${}^{2}$ Division of Mathematical Sciences, School of Physical
and Mathematical Sciences,  Nanyang Technological University,
637371, Singapore. The research of the authors is partially
supported by Singapore AcRF Tier 1 Grant RG58/08,  Singapore MOE
Grant T207B2202 and Singapore NRF2007IDM-IDM002-010.
 }
\date{\today}
 \keywords{Bernstein ellipse,  exponential accuracy, Gegenbauer polynomials, Gegenbauer Gauss-type interpolation and quadrature,  spectral differentiation, maximum error estimates}
 \subjclass{65N35, 65E05, 65M70,  41A05, 41A10, 41A25}
\begin{abstract}  This paper is devoted to a rigorous  analysis of exponential convergence of polynomial interpolation and spectral
differentiation based on the Gegenbauer-Gauss and Gegenbauer-Gauss-Lobatto points, when
the underlying function is analytic on and within an ellipse. Sharp error estimates in the
maximum norm  are derived.
\end{abstract}

\maketitle

\thispagestyle{empty}

\section{Introduction}
Perhaps the most significant advantage of the spectral method is its
high-order of accuracy. The typical convergence rate of the spectral
method  is $O(n^{-m})$ for every $m,$ provided that the underlying
function is sufficiently smooth
\cite{gottlieb1977numerical,MR1470226,Boyd01,Guo.W04,CHQZ06}. If the
function is suitably analytic, the expected rate is $O(q^n)$ with
$0<q<1.$ This  is the so-called {\em exponential convergence,} which
is  well accepted among the community.  There has been much
investigation on exponential decay of spectral expansions of
analytic functions. For instance, the justification for Fourier
and/or Chebyshev series can be found in
\cite{szeg75,davis1975interpolation,rivlin1990chebyshev,Boyd94,MR1937591,TrefSIAMRev08}.
In the seminal work   of Gottlieb and Shu
\cite{gottlieb1992gibbs,Got.S97},  on the resolution of the Gibbs
phenomenon (the interested readers are referred to Gustafsson
\cite{gustafsson2011work} for a review of this significant
contribution),  the exponential convergence, in the maximum norm
(termed as the so-called regularization error), of Gegenbauer
polynomial expansions was derived, when the index (denoted by
$\lambda$ below) grows linearly with the degree $n$. Boyd
\cite{boyd2005trouble} provided an insightful study of the
``diagonal limit" (i.e., $\lambda=\beta n$ for some constant
$\beta>0$)  convergence of the Gegenbauer reconstruction algorithm
in \cite{gottlieb1992gibbs}.  We remark that  under the assumption of
analyticity in \cite{gottlieb1992gibbs}, the exponential accuracy of
Gegenbauer expansions is actually   valid for fixed $\lambda$ (see
Appendix \ref{AppendixAdd} for the justification).
% and the argument can further be extended to
% show the exponential convergence of  general Jacobi polynomial expansions.

It is known that the heart of a  collocation/pseudospectral method
is the spectral differentiation process. That is, given a set of
Gauss-type points $\{x_j\}_{j=0}^n,$ e.g., on $[-1,1],$ the
derivative values $\{u'(x_j)\}$ can be  approximated by an exact
differentiation of the polynomial interpolant  $\{(I_nu)'(x_j)\}.$
Such a direct differentiation technique is also called a
differencing method in the literature (see, e.g.,
\cite{tadmor1986exponential,fornberg1998practical,ReddyWeideman2005}).
For the first time,  Tadmor \cite{tadmor1986exponential}  showed the
exponential accuracy of differencing analytic functions on
Chebyshev-Gauss points, where the main argument was based on
analyzing the (continuous) Chebyshev coefficients and the aliasing
error, and where the intimate relation between Fourier and Chebyshev
basis functions played an essential role in the analysis. Reddy and
Weideman \cite{ReddyWeideman2005} took a different approach and
improved the estimate in \cite{tadmor1986exponential} for Chebyshev
differencing of functions analytic on and within an ellipse
with foci $\pm 1.$
 As pointed out in \cite{ReddyWeideman2005}, although the exponential convergence  of
 spectral differentiation of analytic functions is  appreciated and mentioned in the literature
 (see, e.g., \cite{fornberg1998practical,Tref00,Boyd01}), the rigorous proofs (merely for
 the Fourier and Chebyshev methods) can only be found in  \cite{tadmor1986exponential,ReddyWeideman2005}.
Indeed, to the best of our knowledge, the theoretical justification
even for  the Legendre method is lacking.
It is worthwhile to point out that under the regularity  condition (M): $\|u^{(k)}\|_{L^\infty}\le cM^{k},$
the super-geometric convergence of  Legendre spectral
differentiation was proved by Zhang \cite{ZhangZM04}:
\begin{equation}\label{geometric}
\max_{0\le j\le n}|(u-I_nu)'(x_j)|\le
C\Big(\frac{eM}{2(n+1)}\Big)^{n+2},
\end{equation}
where $\{x_j\}_{j=0}^n$ are the Legendre-Gauss points. Similar
estimate was   nontrivially extended to the Chebyshev collocation
method in \cite{ZhangZM08}. The condition  (M) covers a large class
of functions, but it is
 even more  restrictive  than analyticity. On the other hand,  the regularity index $k$ could be infinite, while the dependence of  \eqref{geometric} on $k$ is not clear.

The main concern of this paper is to show  exponential convergence,
in the maximum norm, of Gegenbauer interpolation and spectral
differentiation on Gegenbauer-Gauss and Gegenbauer-Gauss-Lobatto
points, provided that the underlying function is analytic on and
within a sizable ellipse. The essential argument is based on the
classical Hermite's contour integral (see \eqref{Hermite} below),
and a delicate estimate of the asymptotics of the Gegenbauer
polynomial on the ellipse of interest. It is important  to remark
that the Chebyshev polynomial on the ellipse takes a very simple
explicit form (see, e.g., \cite{davis1975interpolation} or
\eqref{chebform} below), but the Gegenbauer polynomial has a
complicated expression.  Accordingly,  compared with  the Chebyshev
case in \cite{ReddyWeideman2005}, the analysis in this paper is much
more involved. The Chebyshev and Legendre methods are commonly used
in spectral approximations, but we have also witnessed renewed
applications of the Gegenbauer (or more general Jacobi) polynomial
based methods in, e.g.,   defeating Gibbs phenomenon (see, e.g., \cite{gottlieb1992gibbs,gelb2006determining}), $hp$-elements
(see, e.g., \cite{Dub.91,Bab.G01,Kar.S05,GuoBQ2011}), and numerical solutions of differential equations
(see, e.g., \cite{BernardDaugbook1999,GuoBy2001,Guo.W00,Doha91,Doha98,DohaBhrawy06,DohaBhrawy09,DohaBhrawyAbd09,WangGuo08,ShenTangWang2011})
and integral equations (see, e.g., \cite{Chen.T10}).
The results in this paper might be useful for a better understanding
of the methods and have implications in other applications.

The rest of the paper is organized as follows. As some
preliminaries, we briefly review  basic properties of Gegenbauer
polynomials, Gamma functions  and analytic functions in  Section 2. We study the asymptotics of the Gegenbauer polynomials in
Section 3, and present the main results on exponential convergence
of interpolation and spectral differentiation, together with some
numerical results and extensions in the last section.

%\cite{platte2011}

\vspace*{-0.55cm}

\section{Preliminaries}
In this section, we collect  some relevant properties of Gegenbauer polynomials
and  assorted facts  to be used throughout the paper.

\subsection{Gegenbauer polynomials} The analysis heavily relies on  the normalization of \cite{szeg75},
so we define  the Gegenbauer polynomials\footnote{Historically, they are sometimes called ``ultraspherical polynomials" (see, e.g., the footnote on Page 80 of  \cite{szeg75} and Page 302 of \cite{Andrews99}).}
 by  the three-term recurrence:
\begin{equation}\label{PropG3}
\begin{split}
&nC_{n}^{\lambda}(x)=2\left(n+\lambda-1\right)xC_{n-1}^{\lambda}(x)-(n+2\lambda-2)C_{n-2}^{\lambda}(x),
\quad n\ge 2,\\
&C_{0}^{\lambda}(x)=1,\quad C_{1}^{\lambda}(x)=2\lambda x,\quad \lambda>- 1/2,\;\; x\in [-1,1].
\end{split}
\end{equation}
Notice that if $\lambda=0,$  $C_{n}^{\lambda}(x)$
vanishes identically for $n\ge 1.$ This corresponds to the
Chebyshev polynomial, and there
holds
\begin{equation}\label{chebdefn}
\lim_{\lambda\to 0} \lambda^{-1} C_n^\lambda (x)=\frac 2 n
T_n(x)=\frac 2 n\cos(n\ {\rm arccos}(x)),\quad n\ge 1.
\end{equation}
Hereafter, if not specified explicitly, we assume  $\lambda\not =0,$ and  refer
 to \cite{ReddyWeideman2005} for the analysis of the Chebyshev case.
 Notice that for $\lambda=1/2,$ $C^{\lambda}_n(x)=L_n(x),$ i.e., the usual Legendre polynomial of degree $n.$

The Gegenbauer polynomials  are orthogonal with respect to the weight function
$(1-x^2)^{\lambda-1/2},$ namely,
\begin{equation}\label{jacobi_orth}
\int_{-1}^1 C_n^{\lambda}(x) C_m^{\lambda}(x) (1-x^2)^{\lambda-1/2} dx
=h_n^{\lambda} \delta_{mn},
\end{equation}
where $\delta_{mn}$ is the Kronecker symbol, and
\begin{equation}\label{gammafd}
h_n^{\lambda}=\frac{2^{1-2\lambda}
\pi}{\Gamma^2(\lambda)}\frac{\Gamma(n+2\lambda)}{n!(n+\lambda)}.
%h_n^{\lambda}=\frac{\Gamma{(n+2\lambda)}}{n!\Gamma{(2\lambda)}}\frac{\sqrt{\pi}\Gamma{(\lambda+\frac12)}}{(n+\lambda)\Gamma{(\lambda)}}.
\end{equation}
Moreover, we have
\begin{equation}\label{PropG1}
C_n^{\lambda}(-x)=(-1)^nC_n^{\lambda}(x),\quad
C_n^{\lambda}(1)=\frac{\Gamma{(n+2\lambda)}}{n!\Gamma{(2\lambda)}},
\end{equation}
and
\begin{equation}\label{PropG2}
\frac d {dx} C_n^{\lambda}(x)=2\lambda C_{n-1}^{\lambda+1}(x).
\end{equation}

By Formula (4.7.1) and Theorems 7.32.1 and 7.33.1 of Szeg$\ddot{o}$ \cite{szeg75},  we have
\begin{equation}\label{Gegenmaxim}
|C_n^{\lambda}(x)|\le C_n^\lambda(1) \;\;\;  {\rm if}\;  \lambda>0; \quad
|C_n^{\lambda}(x)|\le D_\lambda\, n^{\lambda-1} \;\;\;  {\rm if}\; -\frac 1 2 <\lambda<0 \;{\rm and}  \; n\gg 1,
\end{equation}
where $D_\lambda$ is a positive constant independent of $n.$ % of the two maximum points nearest to $0$ if $n$ is odd; $x_0=0$ if $n$ is even. In the second case with even $n,$ we have
%\begin{equation}\label{maxd}
%\max_{|x|\le 1}|C_n^{\lambda}(x)|\cong \frac{2^{1-\lambda}}{|\Gamma(\lambda)|}n^{\lambda-1}.
%\end{equation}
A  tight bound can be found in \cite{NevaiPEr1994} (also see
 \cite{krasikov2007upper}):
 \begin{equation}\label{sharpest}
 \max_{|x|\le 1}\Big\{(1-x^2)^{\lambda} \big(C_n^\lambda(x)\big)^2 \Big\}\le \frac{2e(2+\sqrt 2 \lambda)}{\pi} h_n^\lambda,\quad \lambda>0,\;  n\ge 0.
 \end{equation}

%{\bf Temporary added!}
%\begin{equation}\label{gammmag1}
%\frac{h_n^{\lambda}}{C_{n}^{\lambda}(1)}= \frac{\sqrt{\pi}\Gamma{(\lambda+1/2)}}{(n+\lambda)\Gamma{(\lambda)}}.
%\end{equation}

\subsection{Gamma and incomplete Gamma functions}
The following properties of the Gamma and incomplete Gamma  functions (cf.
\cite{watson66}) are found useful.
The Gamma
function satisfies
\begin{equation}\label{gammfunP1}
\Gamma(x)\Gamma(x+1/2)=2^{1-2x}\sqrt \pi\, \Gamma(2x),\quad \forall x\ge 0,
\end{equation}
%which, together with \eqref{gammafd} and \eqref{PropG1}, implies
%\begin{equation}\label{gammmag1}
%%\frac{\gamma_n^{(\af)}}{G_{n}^{(\alpha)}(1)}= \frac{\sqrt \pi
%%\Gamma(\af+1) }{\Gamma(\af+1/2)}\frac{1}{n+\af+1/2}.
%\frac{h_n^{\lambda}}{C_{n}^{\lambda}(1)}=\frac{\sqrt{\pi}\Gamma{(\lambda+1/2)}}{(n+\lambda)\Gamma{(\lambda)}}.
%\end{equation}
and
\begin{equation}\label{gammfunP3}
\Gamma{(x)}\Gamma{(-x)}=-\frac{\pi}{x\sin(\pi x)},\quad \forall x>0.
\end{equation}
We would like to quote the  Stirling's formula (see, e.g.,
\cite{Got.S97}):
\begin{equation}\label{stirling}
 \sqrt{2\pi}x^{x+1/2}e^{-x}\leq\Gamma(x+1)\leq \sqrt{2\pi}x^{x+1/2}e^{-x}e^{\frac{1}{12x}},\quad
\forall x\geq 1.
\end{equation}
We also need to use  the incomplete Gamma function  defined by
\begin{equation}\label{incompleteGamma}
\Gamma(\alpha, x)=\int_x^\infty t^{\af-1} e^{-t} dt, \quad \af>0,\;\; x\ge 0,
\end{equation}
which satisfies (see P. 899 of \cite{Jeffrey:1086465})
\begin{equation}\label{incompleteGammaP1}
\Gamma(n+1, x)=n! e^{-x}\sum_{k=0}^n \frac{x^k}{k!},\quad n=0,1,\cdots.
\end{equation}

\subsection{Basics of analytic functions}  Suppose that
$u(x)$ is analytic on $[-1,1].$ Based on the notion of analytic
continuation, there always exists
 a simple connected region $R$ in the complex plane  containing $[-1,1]$ into which $f(x)$ can be continued analytically.  The analyticity  may be characterized by the growth of the derivatives of $u$. More precisely, let ${\mathcal C}$ be a
 simple positively oriented closed contour surrounding $[-1,1]$ and lying in $R.$  Then we have (see, e.g.,
 \cite{davis1975interpolation}):
 \begin{equation}\label{Anconnet}
\frac{|u^{(m)}(x)|}{m!}\le \frac{\max_{z\in {\mathcal C}}|u(z)|L({\mathcal C})}{2\pi \delta^{m+1}},\quad \forall x\in [-1,1],
 \end{equation}
 where $L({\mathcal C})$ is the length of ${\mathcal C},$ and $\delta$ is the distance from ${\mathcal C}$ to $[-1,1]$
 (can be viewed as the distance from  $[-1,1]$ to the nearest singularity of
 $u$ in the complex plane).   Mathematically,  an appropriate  contour to characterize the analyticity  is the so-called Bernstein ellipse:
\begin{equation}\label{Berellips}
{\mathcal E}_\rho:=\Big\{z\in {\mathbb C}~:~ z=\frac 1
2(w+w^{-1})\;\; \text{with}\;\; w=\rho e^{\ri \theta},\; \theta\in
[0,2\pi] \Big\},\quad \rho>1,
\end{equation}
where ${\mathbb C}$ is  the set of all complex numbers, and
$\ri=\sqrt{-1}$ is the complex unit. The ellipse ${\mathcal E}_{\rho}$ has
the foci at
 $\pm 1$ and the  major and minor semi-axes are, respectively
 \begin{equation}\label{semiaxis}
 a=\frac 1 2 \big(\rho+\rho^{-1}\big),\quad  b=\frac 1 2 (\rho-\rho^{-1}),
 \end{equation}
so the sum of two axes is $\rho.$ As illustrated in Figure \ref{Afigrr},   $\rho$ is the radius of the circle
 $w=\rho e^{\ri \theta} $ that is mapped to the ellipse ${\mathcal E}_\rho$
under the conformal mapping:  $z=\frac 1 2(w+w^{-1}).$
\begin{figure}[!th]
  \begin{center}
  % \hspace*{-1.2cm}
    \includegraphics[width=.4 \textwidth]{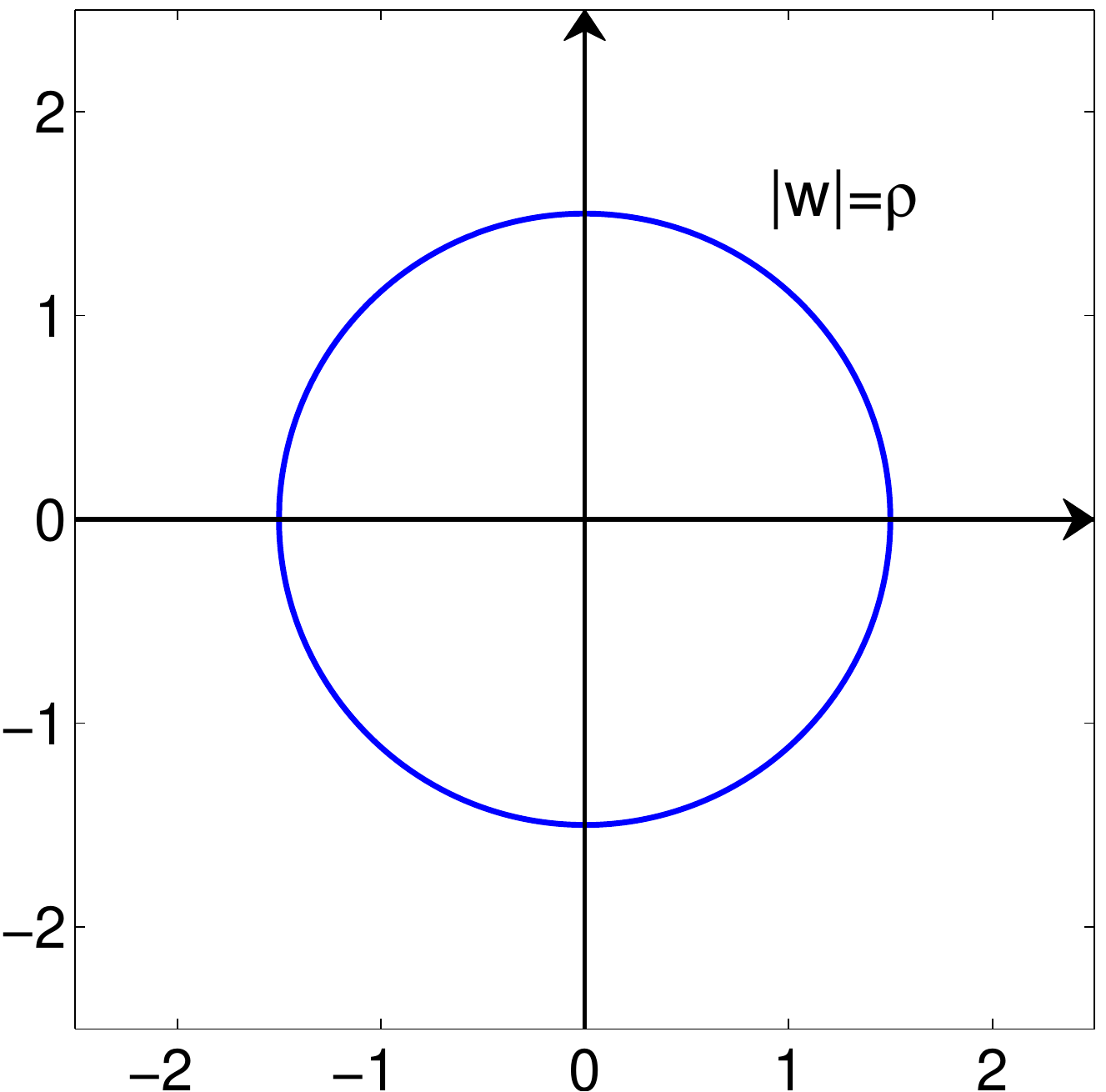}
    \includegraphics[width=.4 \textwidth]{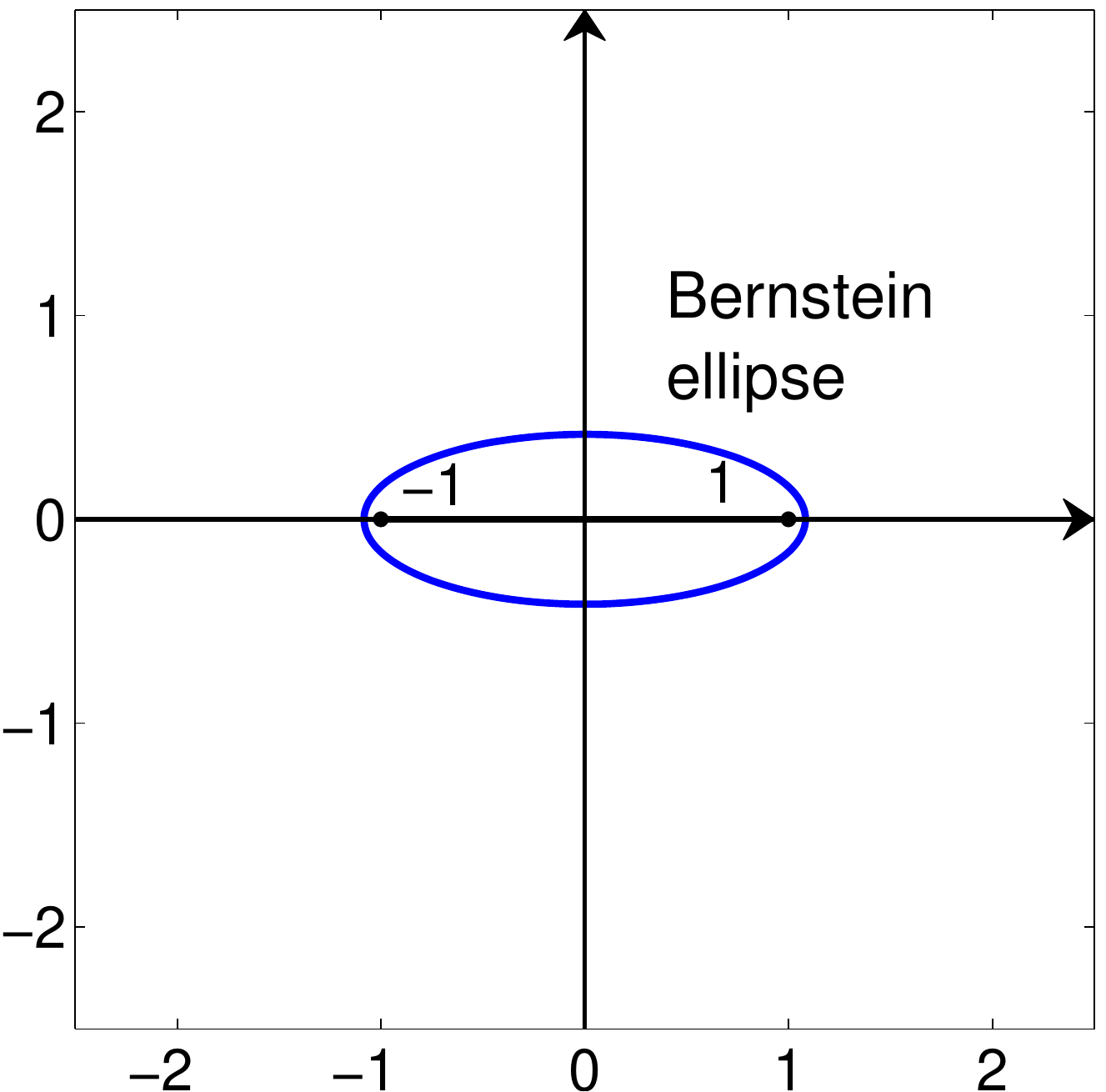}
\caption{\small Circle (left): $|w|=\rho=1.5$, and Bernstein ellipse (right):
${\mathcal E}_\rho$ with foci  $\pm1$ linked by the conformal mapping: $z=\frac 12(w+w^{-1})$.}\label{Afigrr}
  \end{center}
  \end{figure}

According to \cite{ReddyWeideman2005}, the perimeter of ${\mathcal E}_\rho$ satisfies
\begin{equation}\label{Lehp}
L({\mathcal E}_\rho) \leq\pi\sqrt{\rho^2+\rho^{-2}},
\end{equation}
which overestimates the perimeter by less than 12 percent. The distance from ${\mathcal E}_\rho$ to the interval $[-1,1]$ is
\begin{equation}\label{drho}
\delta_\rho=\frac12(\rho+\rho^{-1})-1.
\end{equation}

We are concerned with the interpolation and spectral
differentiation of analytic functions on the Gegenbauer-Gauss-type points.
Let $\{x_j:=x_j(\lambda,n)\}_{j=0}^n$ be the
Gegenbauer-Gauss points (i.e.,  the zeros
 of $C_{n+1}^{\lambda}(x)$) or the Gegenbauer-Gauss-Lobatto points
 (i.e., the zeros of $(1-x^2) C_{n-1}^{\lambda+1}(x)$).
 The associated Lagrange interpolation polynomial of $u$ is given by
 $I_{n} u\in {\mathbb P}_n$ (the set of all polynomials of degree  $\le n$)
  such that $(I_{n} u)(x_j)=u(x_j)$ for $0\le j\le n.$
  Our starting point is the Hermite's contour integral
(see, e.g., \cite{MR760629}):
\begin{equation}\label{Hermite}
(u-I_{n} u)(x)=\frac{1}{2\pi \ri}\oint_{{\mathcal E}_\rho}
\frac{Q_{n+1}(x)}{Q_{n+1}(z)}\frac{u(z)}{z-x}dz,\quad \forall x\in
[-1,1],
\end{equation}
where $Q_{n+1}(x)=C_{n+1}^{\lambda}(x)$ or $(1-x^2)C_{n-1}^{\lambda+1}(x).$
Consequently, we have
\begin{equation}\label{HermiteSP}
(u-I_{n} u)'(x_j)=\frac{1}{2\pi \ri}\oint_{\mathcal E_\rho}
\frac{Q_{n+1}'(x_j)}{Q_{n+1}(z)}\frac{u(z)}{z-x_j}dz, \quad  0\le
j\le n.
\end{equation}
 A crucial component of the error analysis is to obtain a sharp asymptotic estimate of $Q_{n+1}(z)$
on ${\mathcal E}_\rho$ with large $n.$ This will be the main concern of the forthcoming section.

\section{Asymptotic estimate of Gegenbauer polynomials on ${\mathcal E}_\rho$}

%In this section, we derive an asymptotic formula for the Gegenbauer
%polynomial $C_n^{\lambda}(z)$ with $z\in \Sigma_\rho, \lambda>-1/2$
%and large $n.$

 Much of our analysis relies on the following representation of the
 Gegenbauer polynomial.
\begin{lemma}\label{lemma1.4} Let $z=\frac12(w+w^{-1}).$  We have
\begin{equation}\label{Gexp}
C_n^{\lambda}(z)=\sum_{k=0}^n
g_k^{\lambda}g_{n-k}^{\lambda}w^{n-2k},\quad n\ge 0, \;\; \lambda>-1/2,
\end{equation}
where
\begin{equation}\label{ajexp0}
g_0^{\lambda}=1,\quad g_k^{\lambda}={{k+\lambda-1}\choose
k}=\frac{\Gamma(k+\lambda)}{k!\Gamma(\lambda)},\quad 1\le k\le n.
\end{equation}
\end{lemma}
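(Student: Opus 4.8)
The plan is to obtain \eqref{Gexp} from the classical generating function of the Gegenbauer polynomials (see, e.g., \cite{szeg75}),
\begin{equation*}
\sum_{n=0}^{\infty} C_n^{\lambda}(x)\, t^n=(1-2xt+t^2)^{-\lambda},
\end{equation*}
which we regard as an identity of formal power series in $t$ whose coefficients are polynomials in $x.$ The decisive observation is that under the conformal substitution $x=z=\frac12(w+w^{-1})$ the quadratic in the base factorizes:
\begin{equation*}
1-2zt+t^2=1-(w+w^{-1})t+t^2=(1-wt)(1-w^{-1}t),
\end{equation*}
so that $(1-2zt+t^2)^{-\lambda}=(1-wt)^{-\lambda}(1-w^{-1}t)^{-\lambda}.$ This reduces the problem to multiplying two one-variable binomial series.

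Next I would expand each factor. Since $\binom{-\lambda}{j}(-1)^j=\binom{j+\lambda-1}{j}=g_j^{\lambda}$ by \eqref{ajexp0}, one has
\begin{equation*}
(1-wt)^{-\lambda}=\sum_{j=0}^{\infty}g_j^{\lambda}\,w^{j}t^{j},\qquad (1-w^{-1}t)^{-\lambda}=\sum_{m=0}^{\infty}g_m^{\lambda}\,w^{-m}t^{m}.
\end{equation*}
Taking the Cauchy product and collecting the coefficient of $t^n$ (with $j+m=n$) gives $\sum_{j=0}^n g_j^{\lambda}g_{n-j}^{\lambda}w^{2j-n}$, which becomes $\sum_{k=0}^n g_k^{\lambda}g_{n-k}^{\lambda}w^{n-2k}$ after the relabelling $j=n-k.$ Comparing with the coefficient of $t^n$ on the generating-function side, namely $C_n^{\lambda}(z)$, yields \eqref{Gexp}. (The palindromic symmetry $g_k^{\lambda}g_{n-k}^{\lambda}$ makes the choice of relabelling immaterial.)

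I do not expect a genuine obstacle here; the only point requiring care is the legitimacy of extracting the coefficient of $t^n.$ This is cleanest if one argues purely at the level of formal power series in $t$, whose coefficients are Laurent polynomials in $w$ (equivalently polynomials in $z$), so that no question of convergence intervenes; alternatively one may restrict to small $|t|$ with $|w|=1$, where both series converge absolutely, and then invoke that an identity between two polynomials in $z$ valid on $[-1,1]$ holds identically. Should a derivation self-contained within the present preliminaries be preferred (the generating function is not recorded in Section~2), the same formula can be proved by induction on $n$ from the three-term recurrence \eqref{PropG3}, using the elementary relation $k\,g_k^{\lambda}=(k+\lambda-1)\,g_{k-1}^{\lambda}$ that follows from \eqref{ajexp0}; this route is more tedious but avoids quoting any external result.
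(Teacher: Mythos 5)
Your proof is correct, but it takes a genuinely different route from the paper's. The paper proves Lemma \ref{lemma1.4} in Appendix \ref{AppendixA} by induction on $n$ directly from the three-term recurrence \eqref{PropG3}: after checking $n=0,1$, it substitutes the induction hypothesis for $C_{n-1}^{\lambda}$ and $C_{n-2}^{\lambda}$ into \eqref{PropG3}, regroups the powers $w^{n-2k}$, and closes the induction via the identities $g_n^{\lambda}=\frac{n+\lambda-1}{n}g_{n-1}^{\lambda}$ and $D_{n,k}^{\lambda}=n$ --- precisely the ``more tedious but self-contained'' fallback you sketch in your closing paragraph. Your main argument instead quotes the classical generating function $(1-2xt+t^2)^{-\lambda}=\sum_{n}C_n^{\lambda}(x)t^n$, factorizes $1-2zt+t^2=(1-wt)(1-w^{-1}t)$ under $z=\frac12(w+w^{-1})$, and multiplies two binomial series; the identification $(-1)^j\binom{-\lambda}{j}=g_j^{\lambda}$ and the Cauchy product bookkeeping are both correct, and your attention to the legitimacy of extracting the coefficient of $t^n$ (formal power series, or absolute convergence for $|t|<1$ with $|w|=1$, followed by extension of a polynomial identity in $z$ off $[-1,1]$) is exactly the point that needs care, since \eqref{Gexp} is later applied at complex $z\in{\mathcal E}_\rho$. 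What your approach buys is brevity and a conceptual explanation of why the coefficients have the product form $g_k^{\lambda}g_{n-k}^{\lambda}$ (it is literally a Cauchy product); what it costs is reliance on the generating function, an external fact never recorded in the paper's preliminaries, whereas the paper's induction uses nothing beyond \eqref{PropG3} and \eqref{ajexp0}.
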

This formula is derived from the three-term recurrence formula \eqref{PropG3}
and the mathematical induction. Its proof is provided in Appendix
\ref{AppendixA}.

\begin{rem}\label{rmk:Legche} Some  consequences of Lemma \ref{lemma1.4} are in order.
\begin{itemize}
\item[(a)] Comparing the coefficients $w^n$ on both sides of
\eqref{Gexp}, we find that the leading coefficient of $C_n^\lambda$ is $2^n g_n^\lambda.$
This can be also verified from \eqref{PropG3} by the mathematical induction.

\item[(b)] If $\lambda>0,$  then $g_k^\lambda>0$ for all $0\le k\le n.$ On the other hand,
if $\lambda<0,$ we find from \eqref{gammfunP3} and \eqref{ajexp0} that
\begin{equation}\label{redneglambda}
g_k^{\lambda}=\frac{\sin(\pi\lambda)}{\pi}\frac{\Gamma{(k+\lambda)}\Gamma{(1-\lambda)}}{k!}<0,\quad 1\le k\le n.
\end{equation}
\item[(c)]  If $\lambda=1/2,$  it follows from
\eqref{gammfunP1} and \eqref{ajexp0} that
\begin{equation}\label{legendgklam}
g_k^{\lambda}=\frac{(2k)!}{(k!)^2 2^{2k}},\quad 0\le k\le n.
\end{equation}
Such a representation for the Legendre polynomial can be found in,
e.g., \cite{MR760629} and \cite{szeg75}, but the derivation is quite
different.
\item[(d)] If $\lambda=0,$ then by \eqref{chebdefn},
\begin{equation}\label{chebform}
T_n(z)=\frac 1 2 \big(w^{n}+w^{-n}\big),\quad n\ge 1.
\end{equation}
\item[(e)] If $\lambda=1,$ then $g_k^{\lambda}\equiv1$ for $0\le k\le n.$ Therefore,
the Chebyshev polynomial of the second kind has the representation
\begin{equation}\label{chbsecdkind}
 U_n(z)=\frac{w^{n+1}-w^{-(n+1)}}{w-w^{-1}}=w^n \sum_{k=0}^nw^{-2k}={C_n^{1}(z)},\quad n\ge 0,
 \end{equation}
 which can be found in  \cite{MR1937591}. \qed
\end{itemize}
\end{rem}

It is interesting to  observe  from \eqref{chbsecdkind} that for
$\lambda=1,$ $C_n^{\lambda}(z)/w^n$  converges to
$(1-w^{-2})^{-\lambda}$ uniformly for all $|w|>1,$ that is,
   \[
   \sum_{k=0}^\infty w^{-2k}=\frac 1 {1-w^{-2}},\quad |w|>1.
   \]
In what follows, we  show  a similar property holds for general $\lambda>-1/2$ and
$\lambda\not =0.$ More precisely, we  estimate the upper bound
of remainder:
% \comm{\bf \color{red} We'd better add " It is known that
% $\sum_{k=0}^ng_k^{\lambda}w^{-2k}=(1-w^{-2})^{-\lambda}$ uniformly for all $w$ satisfying $|w|=\rho>1$ " and give the corresponding reference.}
\begin{equation}\label{asympto}
\begin{split}
\Big|\big(1-w^{-2}\big)^{-\lambda}-\frac{C_n^{\lambda}(z)}{g_n^{\lambda}
w^n}\Big|&\le \sum_{k=1}^n|\dnk|
|g^{\lambda}_k|\rho^{-2k}+\sum_{k=n+1}^\infty
|g_k^{\lambda}|\rho^{-2k} :=R_n(\rho,\lambda),
\end{split}
\end{equation}
where $z\in {\mathcal E}_\rho$ with $|w|=\rho>1,$ and
\begin{equation}\label{asympto2}
 \dnk=1-\frac{g_{n-k}^\lambda}{g_n^\lambda},\;\;\; 1\le k\le n.
\end{equation}

To obtain a sharp estimate of $R_n(\rho,\lambda),$ it is necessary to understand the behavior of the coefficients $\{g_k^\lambda\}_{k=1}^n$ and $\{\dnk\}_{k=1}^n,$ which are summarized in the following two lemmas.
\begin{lemma}\label{lm:asympgk} For $\lambda>-1/2,\,  k\ge 1$ and $k+\lambda\ge 1,$
\begin{equation}\label{gkasymp}
c_1\,\Big(1+\frac \lambda k \Big)^{k+1/2} e^{-\lambda} \le \frac{\Gamma(\lambda)\,g_k^{\lambda}}{(k+\lambda)^{\lambda-1}}\le c_2\,\Big(1+\frac \lambda k \Big)^{k+1/2} e^{-\lambda},
\end{equation}
where $c_1=e^{- \frac 1{12k}}$ and  $c_2=e^{\frac 1{12(k+\lambda)}}.$
\end{lemma}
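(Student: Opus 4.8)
The plan is to reduce the entire estimate to two applications of Stirling's formula \eqref{stirling}. First I would use the closed form \eqref{ajexp0} to rewrite the middle quantity as a pure ratio of Gamma functions,
\[
\Gamma(\lambda)\,g_k^{\lambda}=\frac{\Gamma(k+\lambda)}{k!}=\frac{\Gamma(k+\lambda)}{\Gamma(k+1)},
\]
and note that under the hypotheses $k\ge 1$ and $k+\lambda\ge 1$ both arguments exceed $1$, so this quantity is positive irrespective of the sign of $\lambda$ (even though $g_k^{\lambda}$ and $\Gamma(\lambda)$ may individually be negative when $\lambda<0$, cf.\ \eqref{redneglambda}). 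It therefore suffices to sandwich $\Gamma(k+\lambda)/\Gamma(k+1)$ from both sides and to divide by $(k+\lambda)^{\lambda-1}$ at the end.

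Next I would estimate the numerator and denominator separately, with one crucial choice of recentering. Rather than applying \eqref{stirling} to $\Gamma(k+\lambda)$ directly, I would write $\Gamma(k+\lambda)=\Gamma(k+\lambda+1)/(k+\lambda)$ and apply \eqref{stirling} with $x=k+\lambda$, which is legitimate precisely because $k+\lambda\ge 1$; after dividing by $k+\lambda$ this yields
\[
\sqrt{2\pi}\,(k+\lambda)^{k+\lambda-1/2}e^{-(k+\lambda)}\le \Gamma(k+\lambda)\le \sqrt{2\pi}\,(k+\lambda)^{k+\lambda-1/2}e^{-(k+\lambda)}\,e^{1/(12(k+\lambda))}.
\]
For the denominator I would apply \eqref{stirling} directly with $x=k\ge 1$, obtaining the analogous two-sided bounds for $\Gamma(k+1)$ with error factor $e^{1/(12k)}$.

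I would then form the ratio, pairing the lower bound of the numerator with the upper bound of the denominator and vice versa. The $\sqrt{2\pi}$ prefactors cancel, the exponential prefactors combine as $e^{-(k+\lambda)}/e^{-k}=e^{-\lambda}$, and the two Stirling error factors collapse to exactly $c_1=e^{-1/(12k)}$ on the lower side and $c_2=e^{1/(12(k+\lambda))}$ on the upper side, giving
\[
c_1\,\frac{(k+\lambda)^{k+\lambda-1/2}}{k^{k+1/2}}\,e^{-\lambda}\le \frac{\Gamma(k+\lambda)}{\Gamma(k+1)}\le c_2\,\frac{(k+\lambda)^{k+\lambda-1/2}}{k^{k+1/2}}\,e^{-\lambda}.
\]
Finally, dividing through by $(k+\lambda)^{\lambda-1}$ and invoking the elementary identity $(k+\lambda)^{k+\lambda-1/2}/k^{k+1/2}=(k+\lambda)^{\lambda-1}(1+\lambda/k)^{k+1/2}$ turns the middle term into $\Gamma(\lambda)g_k^{\lambda}/(k+\lambda)^{\lambda-1}$ and the outer terms into the claimed $c_{1,2}(1+\lambda/k)^{k+1/2}e^{-\lambda}$, completing the proof.

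The single point that demands care, and the one I expect to be the only real obstacle, is the bookkeeping of the Stirling error factors: applying \eqref{stirling} to $\Gamma(k+\lambda+1)$ instead of $\Gamma(k+\lambda)$ is exactly what makes the applicability condition coincide with the hypothesis $k+\lambda\ge 1$ and simultaneously produces the error factor $e^{1/(12(k+\lambda))}$ matching the stated $c_2$, while using $x=k$ for the denominator produces $e^{1/(12k)}$ matching $c_1$. A naive recentering via $x=k+\lambda-1$ would misalign both the hypothesis and the constants; everything beyond this choice is routine algebra.
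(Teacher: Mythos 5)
Your proof is correct and is essentially the paper's own argument: the paper also applies Stirling's formula \eqref{stirling} to the identity $(k+\lambda)\Gamma(\lambda)g_k^{\lambda}=\Gamma(k+\lambda+1)/\Gamma(k+1)$, which is exactly your recentering $\Gamma(k+\lambda)=\Gamma(k+\lambda+1)/(k+\lambda)$ with $x=k+\lambda$ for the numerator and $x=k$ for the denominator. You have merely written out the bookkeeping that the paper leaves implicit, and your pairing of the Stirling error factors reproduces $c_1=e^{-1/(12k)}$ and $c_2=e^{1/(12(k+\lambda))}$ exactly as intended.
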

\begin{proof} Applying the Stirling's formula \eqref{stirling} to
\[
(k+\lambda) \Gamma(\lambda)\, g_k^\lambda=\frac{\Gamma(k+\lambda+1)}{\Gamma(k+1)}
\]
leads to  \eqref{gkasymp}.
\end{proof}
\begin{lemma}\label{lemma1.5} Let $\{g_k^\lambda\}_{k=1}^n$ and $\{\dnk\}_{k=1}^n$  be the sequences as
defined in \eqref{ajexp0} and \eqref{asympto2}, respectively.
\begin{itemize}
\item[(i)] If  $\lambda>1$, then there holds
\begin{equation}\label{dnkcase1}
0<d_{n,1}^{\lambda}<d_{n,2}^{\lambda}< \cdots <d_{n,n}^\lambda <1.
\end{equation}
\item[(ii)] If $-1/2<\lambda<1$ and $\lambda\not =0,$  then
\begin{equation}\label{dnkcase2aa}
\cdots <|g_{k+1}^\lambda|<|g_{k}^\lambda|<\cdots <|g_{1}^\lambda|<g_{0}^\lambda=1,
\end{equation}
and we have
\begin{equation}\label{dnkcase2}
0<-d_{n,1}^{\lambda}<-d_{n,2}^{\lambda}< \cdots <-d_{n,n-1}^\lambda,
\end{equation}
and
\begin{equation}\label{dnkcase3}
 |d_{n,k}^\lambda g_k^{\lambda}|<1\;\;\;{\rm for}\;\;\;   1\le  k\le n-1,\;  n\ge 3.   %\frac{|\lambda|}{1-\frac{1-\lambda}{n-k+1}},\quad 1\le k\le n-1.
\end{equation}
\end{itemize}
\end{lemma}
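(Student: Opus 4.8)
The entire argument rests on the elementary consecutive ratio, which I would compute first from the factorial form \eqref{ajexp0}:
\[
\frac{g_{k+1}^{\lambda}}{g_k^{\lambda}}=\frac{k+\lambda}{k+1},\qquad k\ge 0.
\]
Since $k+\lambda\ge 1/2>0$ for $k\ge 1$ and $\lambda>-1/2$, this ratio is positive, so all the $g_k^\lambda$ of a given $\lambda$ share one sign, and whether its modulus exceeds $1$ is decided solely by the sign of $\lambda-1$. For $\lambda>1$ the ratio is $>1$, so $\{g_k^\lambda\}$ is positive and strictly increasing; for $-1/2<\lambda<1$, $\lambda\ne0$, it lies in $(0,1)$, which is precisely \eqref{dnkcase2aa}.

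For part (i) I would use that $g_k^\lambda$ is positive and strictly increasing. As $k$ runs from $1$ to $n$ the index $n-k$ decreases, so $g_{n-k}^\lambda$ decreases and $g_{n-k}^\lambda/g_n^\lambda$ strictly decreases; hence $\dnk=1-g_{n-k}^\lambda/g_n^\lambda$ strictly increases, giving the ordering in \eqref{dnkcase1}. The endpoints are immediate: $d_{n,1}^\lambda>0$ because $g_{n-1}^\lambda<g_n^\lambda$, and $d_{n,n}^\lambda=1-1/g_n^\lambda<1$ because $g_n^\lambda>0$.

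For part (ii) I would abbreviate $a_k:=|g_k^\lambda|$, so that by \eqref{dnkcase2aa} the sequence $\{a_k\}$ is strictly decreasing with $a_0=1$ and $a_1=|\lambda|$. Throughout the relevant range $1\le k\le n-1$ one has $n-k\ge 1$, so in both sign regimes $g_{n-k}^\lambda/g_n^\lambda=a_{n-k}/a_n>1$, whence $-\dnk=a_{n-k}/a_n-1>0$; and since $a_{n-k}$ increases with $k$, so does $-\dnk$, which is \eqref{dnkcase2}. The same identity rewrites the target of \eqref{dnkcase3} in the clean form
\[
|\dnk\, g_k^{\lambda}|=a_k\Big(\frac{a_{n-k}}{a_n}-1\Big)=P_k-a_k>0,\qquad P_k:=\frac{a_k a_{n-k}}{a_n},
\]
so it suffices to prove $P_k<1$.

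The crux, and the step I expect to be the main obstacle, is controlling $P_k$. I would study its discrete ratio
\[
\frac{P_{k+1}}{P_k}=\frac{(k+\lambda)(n-k)}{(k+1)(n-k-1+\lambda)},
\]
whose numerator minus denominator factors as $(\lambda-1)(n-2k-1)$. Because $\lambda-1<0$, this shows $P_k$ strictly decreases for $k<(n-1)/2$ and strictly increases for $k>(n-1)/2$, so over $1\le k\le n-1$ its maximum is attained at the two endpoints, where $P_1=P_{n-1}=|\lambda|\,n/(n-1+\lambda)$. Finally $P_1<1$ reduces to $|\lambda|\,n<n-1+\lambda$: for $0<\lambda<1$ this is $\lambda(n-1)<n-1$, which is clear; for $-1/2<\lambda<0$ it is $|\lambda|<(n-1)/(n+1)$, which holds because $|\lambda|<1/2\le(n-1)/(n+1)$ exactly when $n\ge3$, accounting for that hypothesis. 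Hence $P_k-a_k<P_k\le P_1<1$, which is \eqref{dnkcase3}.
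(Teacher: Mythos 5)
Your proof is correct, and for the crucial inequality \eqref{dnkcase3} it takes a genuinely different, and in fact tighter, route than the paper. For part (i), the chain \eqref{dnkcase2aa}, and \eqref{dnkcase2}, both you and the paper argue identically from the consecutive ratio $g_{k+1}^{\lambda}/g_k^{\lambda}=(k+\lambda)/(k+1)$. For \eqref{dnkcase3}, the paper splits off $k=1$ as a direct computation and, for $2\le k\le n-1$, bounds $|d_{n,k}^\lambda g_k^\lambda|$ by $\big(g_{n-k}^\lambda/g_n^\lambda\big)|g_k^\lambda|$ (your $P_k$), which it rewrites as the explicit product
\begin{equation*}
\left(\prod_{j=0}^{k-2}\frac{1-\frac{1-\lambda}{k-j}}{1-\frac{1-\lambda}{n-j}}\right)\frac{|\lambda|}{1-\frac{1-\lambda}{n-k+1}}
\end{equation*}
and then simply asserts is $<1$. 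That assertion is not factor-by-factor obvious: each factor in the product is indeed $<1$, but the trailing factor $|\lambda|\big/\big(1-\tfrac{1-\lambda}{n-k+1}\big)$ can exceed $1$ when $-1/2<\lambda<-1/3$ and $k$ is close to $n-1$, so the paper's final step leaves the compensation between factors unjustified. Your argument supplies exactly the missing justification by a different decomposition: you study $P_k=a_ka_{n-k}/a_n$ as a sequence in $k$, factor the ratio comparison as $(\lambda-1)(n-2k-1)$ to get unimodality (decreasing then increasing), conclude the maximum sits at the symmetric endpoints $P_1=P_{n-1}=|\lambda|n/(n-1+\lambda)$, and check that this is $<1$ precisely under the hypothesis $n\ge 3$ (which is where $|\lambda|<1/2\le (n-1)/(n+1)$ enters). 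This treats $k=1$ and $2\le k\le n-1$ uniformly, makes the role of the assumption $n\ge3$ transparent, and closes the gap the paper glosses over; the paper's approach, by contrast, is shorter on the page but only because it omits the delicate part of the estimate.
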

\begin{proof} By \eqref{ajexp0},
\begin{equation}\label{ckprop1}
\frac{g_{k+1}^{\lambda}}{g_k^{\lambda}}= \frac{k+\lambda}{k+1}\;\;\; {\rm for}\;\; \lambda \not =0.
\end{equation}
Thus for $\lambda>1,$ $\{g_k^{\lambda}\}$ is strictly increasing  with respect to $k,$ which, together with
the fact  $g_k^\lambda>0,$ implies
\[
0<\dnk=1-\frac{g_{n-k}^\lambda}{g_{n}^\lambda}<1,
\]
and
\[
d_{n,k+1}^\lambda-\dnk= \frac{g_{n-k}^\lambda-g_{n-k-1}^\lambda}{g_{n}^\lambda}>0.
\]
This completes the proof of (i).

The property \eqref{dnkcase2aa} is a direct consequence of \eqref{ckprop1}, and \eqref{dnkcase2} can be proved in a fashion  similar to \eqref{dnkcase1}.  It remains  to verify \eqref{dnkcase3}.
%For $-1/2<\lambda<1$ and $\lambda\not =0,$
If $k=1,$ a direct calculation by using \eqref{ajexp0} yields
\[
|d_{n,1}^\lambda g_1^\lambda|=|\lambda|\frac{1-\lambda}{n-(1-\lambda)}<1,\quad \forall n\ge 3.
\]
For $2\le k\le n-1,$  it follows from \eqref{ajexp0} and  \eqref{dnkcase2} that
\[
\begin{split}
|d_{n,k}^\lambda g_k^{\lambda}|&= \Big(\frac{g_{n-k}^\lambda}
{g_{n}^\lambda} -1\Big)|g_k^{\lambda}|< \frac{g_{n-k}^\lambda}
{g_{n}^\lambda}|g_k^{\lambda}| =\left(\prod_{j=0}^{k-2}
\frac{1-\frac{1-\lambda}{k-j}}{1-\frac{1-\lambda}{n-j}}\right)
\frac{|\lambda|}{1-\frac{1-\lambda}{n-k+1}}<1.
\end{split}
\]
This ends the proof.
\end{proof}

With the above preparation, we  are ready to present the main result on the upper bound of
$R_n(\rho,\lambda)$ in \eqref{asympto}.
\begin{theorem}\label{thm3.1} Let $R_n(\rho,\lambda)$ with $\rho>1$ be the remainder as defined
in  \eqref{asympto}.
\begin{itemize}
\item[(i)] If $\lambda>1,$ then for all $n\ge m\ge 1$ and
\begin{equation}\label{Aconstcond}
m+2 \ge  (\lambda-1)\Big(\frac{1}{2\ln \rho}-1\Big),
\end{equation}
we have
\begin{equation}\label{case1abest}
%\Big|\big(1-w^{-2}\big)^{-\lambda}-\frac{C_n^{\lambda}(z)}{g_n^{\lambda}
%w^n}\Big|
R_n(\rho,\lambda)\le d_{n,m}^{\lambda}
\Big((1-\rho^{-2})^{-\lambda}-1\Big)+ A\frac{[\lambda]!} {(2\ln \rho)^{\lambda}} \frac{(m+\lambda)^{[\lambda]}}{\rho^{2(m-1)}},
\end{equation}
% \comm{\bf \color{red} "$\leq$" is "less than"}
where  $[\lambda]$ is the largest integer $\le \lambda,$ and
\begin{equation}\label{AconstAA}
 A= \frac 1 {\Gamma(\lambda)} {\rm exp}\Big({\frac 1 {12(m+1+\lambda)}+\frac \lambda {2(m+1)}}\Big).
\end{equation}
\item[(ii)]  If $-1/2<\lambda<1$ and $\lambda\not =0,$ then for all $n\ge m\ge 1$ and $n\ge 3,$
\begin{equation}\label{case1abestc}
%\Big|\big(1-w^{-2}\big)^{-\lambda}-\frac{C_n^{\lambda}(z)}{g_n^{\lambda}w^n}\Big|
R_n(\rho,\lambda) \le |d_{n,m}^{\lambda}| \big|(1-\rho^{-2})^{-\lambda}-1\big|+\frac{\rho^{-2m}}{\rho^2-1}+2 \rho^{-2n}.
\end{equation}
\end{itemize}
Here, the factor $d_{n,m}^{\lambda}$ is given by \eqref{asympto2}.
\end{theorem}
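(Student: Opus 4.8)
The plan is to handle the two sums in the definition \eqref{asympto} of $R_n(\rho,\lambda)$ by combining the monotonicity properties of $\{g_k^\lambda\}$ and $\{\dnk\}$ from Lemmas \ref{lm:asympgk} and \ref{lemma1.5} with the generating identity $(1-\rho^{-2})^{-\lambda}=\sum_{k=0}^\infty g_k^\lambda\rho^{-2k}$, which holds since $0<\rho^{-2}<1$ and $g_k^\lambda=\binom{k+\lambda-1}{k}$ are exactly the binomial coefficients of $(1-x)^{-\lambda}$. In particular $(1-\rho^{-2})^{-\lambda}-1=\sum_{k=1}^\infty g_k^\lambda\rho^{-2k}$, and more generally $\sum_{k=1}^\infty|g_k^\lambda|\rho^{-2k}=\big|(1-\rho^{-2})^{-\lambda}-1\big|$, the cases $0<\lambda<1$ and $\lambda<0$ differing only by the common sign of the $g_k^\lambda$ for $k\ge1$. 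In both parts I would split the first sum at the index $m$: monotonicity of $|\dnk|$, namely \eqref{dnkcase1} in part (i) and \eqref{dnkcase2} in part (ii), gives $|\dnk|\le|d_{n,m}^{\lambda}|$ for $k\le m$, so that $\sum_{k=1}^{m}|\dnk g_k^\lambda|\rho^{-2k}\le |d_{n,m}^{\lambda}|\sum_{k=1}^\infty|g_k^\lambda|\rho^{-2k}$, which produces the leading term $|d_{n,m}^{\lambda}|\,|(1-\rho^{-2})^{-\lambda}-1|$.

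For part (ii) the remaining pieces are elementary. For $m+1\le k\le n-1$ I would invoke $|\dnk g_k^\lambda|<1$ from \eqref{dnkcase3}, and for the tail $k\ge n+1$ the bound $|g_k^\lambda|<1$ from \eqref{dnkcase2aa}; combining these two disjoint ranges and completing the missing index $k=n$ into a geometric series gives $\sum_{k=m+1}^{n-1}|\dnk g_k^\lambda|\rho^{-2k}+\sum_{k=n+1}^\infty|g_k^\lambda|\rho^{-2k}<\sum_{k=m+1}^\infty\rho^{-2k}-\rho^{-2n}=\frac{\rho^{-2m}}{\rho^2-1}-\rho^{-2n}$, which lies below the second term of \eqref{case1abestc}. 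The single leftover term $k=n$ is controlled directly: since $d_{n,n}^{\lambda}=1-1/g_n^\lambda$ and $|g_n^\lambda|<1$, one has $|d_{n,n}^{\lambda}g_n^\lambda|=|g_n^\lambda-1|\le 1+|g_n^\lambda|<2$, which yields the $2\rho^{-2n}$ term. The hypothesis $n\ge 3$ is precisely what \eqref{dnkcase3} requires.

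Part (i) carries the real work. After the split at $m$, the residual $\sum_{k=m+1}^n d_{n,k}^{\lambda}g_k^\lambda\rho^{-2k}+\sum_{k=n+1}^\infty g_k^\lambda\rho^{-2k}$ is bounded, using $d_{n,k}^{\lambda}<1$ from \eqref{dnkcase1}, by the full tail $\sum_{k=m+1}^\infty g_k^\lambda\rho^{-2k}$, and the crux is to estimate this tail sharply. I would first convert Lemma \ref{lm:asympgk} into the clean pointwise bound $g_k^\lambda\le A\,(k+\lambda)^{\lambda-1}$ for $k\ge m+1$: writing $(1+\lambda/k)^{k+1/2}e^{-\lambda}\le e^{\lambda/(2k)}$ via $\ln(1+x)\le x$, and noting that $e^{1/(12(k+\lambda))}e^{\lambda/(2k)}$ decreases in $k$, its value at $k=m+1$ produces exactly the constant $A$ in \eqref{AconstAA}. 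Thus $\sum_{k=m+1}^\infty g_k^\lambda\rho^{-2k}\le A\sum_{k=m+1}^\infty(k+\lambda)^{\lambda-1}\rho^{-2k}$.

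The hypothesis \eqref{Aconstcond} enters at this point: it is arranged so that $f(t)=(t+\lambda)^{\lambda-1}\rho^{-2t}$ is non-increasing from $k=m+1$ onward, its log-derivative $\frac{\lambda-1}{t+\lambda}-2\ln\rho$ being $\le 0$ there, which is what I would use to pass to the integral $\sum_{k=m+1}^\infty f(k)\le\int_m^\infty f(t)\,dt$. The substitution $s=2\ln\rho\,(t+\lambda)$ turns this integral into an incomplete Gamma function, $\int_m^\infty f=(2\ln\rho)^{-\lambda}\rho^{2\lambda}\,\Gamma(\lambda,2\ln\rho(m+\lambda))$. Since $t^{\lambda-1}\le t^{[\lambda]}$ for $t\ge 1$ (because $\lambda-1\le[\lambda]$), I would bound $\Gamma(\lambda,x)\le\Gamma([\lambda]+1,x)$ and apply the closed form \eqref{incompleteGammaP1} with $x=2\ln\rho(m+\lambda)$ and $e^{-x}=\rho^{-2(m+\lambda)}$; the finite sum is then controlled by $\sum_{j=0}^{[\lambda]}\frac{x^j}{j!}\le(m+\lambda)^{[\lambda]}\sum_{j=0}^\infty\frac{(2\ln\rho)^j}{j!}=(m+\lambda)^{[\lambda]}\rho^{2}$, using $(m+\lambda)^j\le(m+\lambda)^{[\lambda]}$. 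Collecting the constants reproduces the second term of \eqref{case1abest}. I expect the main obstacle to be exactly this last chain: making the asymptotics of $g_k^\lambda$ uniform enough to yield the explicit $A$, tracking the monotonicity threshold coming from \eqref{Aconstcond} precisely enough to justify the integral comparison, and truncating the incomplete Gamma function at $[\lambda]$ so as to land on the stated sharp constant rather than a cruder one.
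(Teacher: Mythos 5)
Your proposal is correct and takes essentially the same route as the paper's own proof: the same split of the sums at the index $m$ with the monotonicity of $d_{n,k}^{\lambda}$ from Lemma \ref{lemma1.5}, the same conversion of Lemma \ref{lm:asympgk} into the bound $\Gamma(\lambda)g_k^{\lambda}\le A\,(k+\lambda)^{\lambda-1}$ via $1+x\le e^{x}$, the same use of \eqref{Aconstcond} to justify the integral comparison and the incomplete-Gamma truncation $\Gamma(\lambda,\cdot)\le\Gamma([\lambda]+1,\cdot)$ with \eqref{incompleteGammaP1}, and in part (ii) the identical treatment of the ranges $k\le m$, $m+1\le k\le n-1$, $k=n$ (via $|d_{n,n}^{\lambda}g_n^{\lambda}|=|g_n^{\lambda}-1|<2$), and $k\ge n+1$. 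There is nothing of substance to distinguish the two arguments.
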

\begin{proof}  (i) For $\lambda>1,$ we obtain from \eqref{dnkcase1} that
\begin{equation}\label{case1a}
\begin{split}
R_n(\rho,\lambda)&=\sum_{k=1}^m \dnk g_k^{\lambda}\rho^{-2k}+\sum_{k=m+1}^n \dnk g_k^{\lambda}\rho^{-2k}
+\sum_{k=n+1}^\infty  g_k^{\lambda}\rho^{-2k}\\
&\overset{\re{dnkcase1}}\leq d_{n,m}^{\lambda} \sum_{k=1}^m g_k^{\lambda}\rho^{-2k}+\sum_{k=m+1}^n g_k^{\lambda} \rho^{-2k} +\sum_{k=n+1}^\infty  g_k^{\lambda}\rho^{-2k}
\\
&\leq d_{n,m}^{\lambda}
\Big((1-\rho^{-2})^{-\lambda}-1\Big)+ \sum_{k=m+1}^\infty g_k^{\lambda} \rho^{-2k}.
\end{split}
\end{equation}
By Lemma \ref{lm:asympgk},
\begin{equation*}\label{case1abc}
\begin{split}
\sum_{k=m+1}^\infty g_k^{\lambda} \rho^{-2k}&\le {(\Gamma(\lambda))^{-1}}    {e^{-\lambda}e^{\frac 1 {12(m+1+\lambda)}}} \sum_{k=m+1}^\infty
\frac{(k+\lambda)^{\lambda-1}}{\rho^{2k}} \Big(1+\frac \lambda k\Big)^{k+1/2}
\\&\le {(\Gamma(\lambda))^{-1}} e^{-\lambda} e^{\frac 1 {12(m+1+\lambda)}}  \sum_{k=m+1}^\infty
\frac{(k+\lambda)^{\lambda-1}}{\rho^{2k}}  e^{\lambda+\frac \lambda {2k}},
\end{split}
\end{equation*}
where we used the inequality $1+x<e^x$ for $x>0.$ Hence,
\begin{equation}\label{Aconst}
\sum_{k=m+1}^\infty g_k^{\lambda} \rho^{-2k}\le A \sum_{k=m+1}^\infty \frac{(k+\lambda)^{\lambda-1}}{\rho^{2k}},
\end{equation}
where $A$ is given by \eqref{AconstAA}.
One verifies that under the condition \eqref{Aconstcond},  ${(k+\lambda)^{\lambda-1}}/{\rho^{2k}}$ is decreasing with respect to $k.$ Therefore, by \eqref{incompleteGamma} and \eqref{incompleteGammaP1},
\begin{equation*}\label{temp0}
\begin{split}
\sum_{k=m+1}^\infty \frac{(k+\lambda)^{\lambda-1}}{\rho^{2k}}&\le \int_m^\infty {(x+\lambda)^{\lambda-1}}\rho^{-2x} dx=\frac{\rho^{2\lambda}}{(2\ln \rho)^{\lambda}} \int_{2(m+\lambda)\ln \rho}^\infty x^{\lambda-1} e^{-x} dx\\
&=\frac{\rho^{2\lambda}}{(2\ln \rho)^{\lambda}} \Gamma\big(\lambda, 2(m+\lambda)\ln \rho\big)
\le
\frac{\rho^{2\lambda}}{(2\ln \rho)^{\lambda}} \Gamma\big([\lambda]+1, 2(m+\lambda)\ln \rho\big)\\
&=\frac{[\lambda]!  \rho^{-2m}}{(2\ln \rho)^{\lambda}}
\sum_{k=0}^{[\lambda]} \frac{(m+\lambda)^k(2\ln \rho)^k}{k!}\le \frac{[\lambda]!}{(2\ln \rho)^{\lambda}} \frac{(m+\lambda)^{[\lambda]}}{\rho^{2m}} \sum_{k=0}^{\infty} \frac{(2\ln \rho)^k}{k!}  \\
&= \frac{[\lambda]!} {(2\ln \rho)^{\lambda}}
\frac{(m+\lambda)^{[\lambda]}}{\rho^{2(m-1)}}.
\end{split}
\end{equation*}
 A combination of the above estimates leads to \eqref{case1abest}.

(ii) Now, we turn to the proof of the second case:  $-1/2<\lambda<1$ and $\lambda\not =0.$  By Lemma
\ref{lemma1.5},
\begin{equation*}\label{case2a}
\begin{split}
R_n(\rho,\lambda)&=\sum_{k=1}^m |\dnk| |g_k^{\lambda}|\rho^{-2k}+\sum_{k=m+1}^{n}|\dnk||g_k^{\lambda}|\rho^{-2k}
+\sum_{k=n+1}^\infty|g_k^\lambda|\rho^{-2k} \\
&\overset{\re{dnkcase2}}\leq |d_{n,m}^{\lambda}|\sum_{k=1}^m |g_k^{\lambda}|\rho^{-2k}
\overset{\re{dnkcase3}}+\sum_{k=m+1}^{n-1}\rho^{-2k}+|d_{n,n}^{\lambda}g_n^{\lambda}|\rho^{-2n} \overset{\re{dnkcase2aa}}+ \sum_{k=n+1}^\infty \rho^{-2k}\\
&\quad \leq |d_{n,m}^{\lambda}| \big|(1-\rho^{-2})^{-\lambda}-1\big|+\frac{\rho^{-2m}}{\rho^2-1}+2 \rho^{-2n},
\end{split}
\end{equation*}
where in the last step, we used the following facts:
\[
\begin{split}
\sum_{k=1}^m |g_k^{\lambda}|\rho^{-2k}&\leq{\rm sign}(\lambda)
\sum_{k=1}^\infty g_k^{\lambda}\rho^{-2k}={\rm sign}(\lambda)
\big((1-\rho^{-2})^{-\lambda}-1\big),\quad \rho>1,
\end{split}
\]
(note: ${\rm sign}(\lambda)$ is the sign of $\lambda$), and
$|d_{n,n}^{\lambda}g_n^{\lambda}|=|g_n^{\lambda}-1|<2,$ thanks to  \eqref{dnkcase2aa}.
\end{proof}

The estimate in Theorem \ref{thm3.1} is quite tight and is valid even for small $n.$  By choosing
a suitable  $m$ to balance the two error terms in the upper bound, we are able to derive the anticipated  asymptotic estimate.
\begin{theorem}\label{Th:exactest} For any   $z\in {\mathcal E}_\rho$ with $|w|=\rho>1,$ and
any $\lambda>-1/2$ and $\lambda\not =0,$ there exists
$0<\varepsilon\leq1/2$ such that
\begin{equation}\label{mainasymestimate}
\Big|\big(1-w^{-2}\big)^{-\lambda}-\frac{C_n^{\lambda}(z)}{g_n^{\lambda}
w^n}\Big|\le A(\rho,\lambda)n^{\varepsilon-1}+O(n^{-1}),
\end{equation}
where
\begin{equation}\label{Arhoconst}
A(\rho,\lambda)=|1-\lambda| \big|(1-\rho^{-2})^{-\lambda}-1\big|.
\end{equation}
\end{theorem}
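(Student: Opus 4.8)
The plan is to feed the two explicit bounds on $R_n(\rho,\lambda)$ supplied by Theorem \ref{thm3.1} into a careful asymptotic analysis of the single coefficient $d_{n,m}^\lambda$, treating the free integer $m$ as a fractional power of $n$. The point is that both estimates \eqref{case1abest} and \eqref{case1abestc} share the same structure: a leading contribution proportional to $|d_{n,m}^\lambda|\,\big|(1-\rho^{-2})^{-\lambda}-1\big|$, plus a remainder that decays like $\rho^{-2m}$ up to a polynomial-in-$m$ factor. I would first fix $0<\varepsilon\le 1/2$ and set $m=\lfloor n^\varepsilon\rfloor$, so that $m\to\infty$ while $m/n\to 0$. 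With this choice the remainder in each bound carries a factor $\rho^{-2m}$ with $m\sim n^\varepsilon$, which decays faster than any negative power of $n$ and is therefore absorbed into $O(n^{-1})$; moreover the growth hypothesis \eqref{Aconstcond} needed in case (i) holds automatically for large $n$, since its right-hand side is a fixed constant while $m\to\infty$.

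The crux is the asymptotics of $d_{n,m}^\lambda=1-g_{n-m}^\lambda/g_n^\lambda$. Using the ratio \eqref{ckprop1}, I would write the quotient as a telescoping product
\[
\frac{g_{n-m}^\lambda}{g_n^\lambda}=\prod_{j=n-m+1}^{n}\frac{j}{j+\lambda-1},
\]
take logarithms, and Taylor-expand $\ln\bigl(1+(\lambda-1)/j\bigr)=(\lambda-1)/j+O(1/j^2)$. Summing the leading pieces over the short window $n-m+1\le j\le n$ gives $\sum 1/j=m/n+O(m^2/n^2)$, whence $g_{n-m}^\lambda/g_n^\lambda=1-(\lambda-1)m/n+O(m^2/n^2)$ and therefore
\[
d_{n,m}^\lambda=(\lambda-1)\frac{m}{n}+O\Big(\frac{m^2}{n^2}\Big).
\]
Note this expansion is valid uniformly in the sign of $\lambda-1$, so that $|d_{n,m}^\lambda|=|\lambda-1|\,m/n+O(m^2/n^2)$ covers both cases $\lambda>1$ and $-1/2<\lambda<1$ at once.

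Finally I would substitute $m=\lfloor n^\varepsilon\rfloor$. The main term becomes $|\lambda-1|\,n^{\varepsilon-1}$ up to an $O(1/n)$ rounding error from the floor, so that
\[
|d_{n,m}^\lambda|\,\big|(1-\rho^{-2})^{-\lambda}-1\big|=|1-\lambda|\,\big|(1-\rho^{-2})^{-\lambda}-1\big|\,n^{\varepsilon-1}+O(n^{2\varepsilon-2}),
\]
which is exactly $A(\rho,\lambda)\,n^{\varepsilon-1}$ by \eqref{Arhoconst}, plus a correction $O(n^{2\varepsilon-2})$. The restriction $\varepsilon\le 1/2$ enters precisely here: it forces $2\varepsilon-2\le -1$, so the quadratic-in-$m$ product correction collapses into $O(n^{-1})$. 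Combining this with the super-polynomially small remainder from the second terms of \eqref{case1abest} and \eqref{case1abestc} yields \eqref{mainasymestimate}. The delicate point, and the one I would treat most carefully, is the uniform control of the error in the logarithmic expansion of the product: because $j$ ranges only over the window $[n-m+1,n]$ with $m=o(n)$, every factor satisfies $j\ge n-m+1\sim n$, so both the tail $\sum 1/j^2=O(m/n^2)$ and the quadratic contribution $O(m^2/n^2)$ are genuinely of order $n^{-1}$ once $\varepsilon\le 1/2$; checking that the floor discretization and these two distinct error sources all remain within $O(n^{-1})$ is the only substantive bookkeeping obstacle.
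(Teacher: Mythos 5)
Your proposal is correct and follows essentially the same route as the paper's proof: bound the left-hand side by $R_n(\rho,\lambda)$ via \eqref{asympto}, invoke the two cases of Theorem \ref{thm3.1}, take $m=[n^{\varepsilon}]$ with $0<\varepsilon\le 1/2$, show $d_{n,m}^{\lambda}=(\lambda-1)n^{\varepsilon-1}+O(n^{-1})$, and absorb the super-polynomially small $\rho^{-2m}$ remainders (and the condition \eqref{Aconstcond}, automatic for large $n$) into $O(n^{-1})$ — this is exactly the paper's \eqref{gratioest}--\eqref{mlmabda}. The only difference is technical: the paper expands $g_{n-m}^{\lambda}/g_{n}^{\lambda}$ by applying Stirling's formula \eqref{stirling} to the Gamma-function ratios in \eqref{ajexp0}, whereas you write it as the telescoping product $\prod_{j=n-m+1}^{n} j/(j+\lambda-1)$ via \eqref{ckprop1} and expand its logarithm; your variant is marginally more elementary (no Stirling needed) and makes the uniformity in the sign of $\lambda-1$ transparent, but it lands on the identical intermediate estimate, so the two proofs are the same in substance.
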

\begin{proof}
We first estimate $|d_{n,m}^\lambda|$ in Theorem \ref{thm3.1}, when  $n-m$ is large.
Using the Stirling's formula  \eqref{stirling} and \eqref{ajexp0} that
\begin{equation*}
\begin{split}
\frac{g_{n-m}^\lambda}{ g_n^{\lambda}}&=\Big(1+\frac{1-\lambda}{n+\lambda-1}\Big)^{n+\frac 1 2}\Big(1-\frac{1-\lambda}{n-m}\Big)^{n-m+\frac 12}
\Big(1-\frac{m}{n+\lambda-1}\Big)^{\lambda-1}\Big\{1+O\Big(\frac 1 {n-m}\Big)\Big\}\\
&=\Big(1-\frac{m}{n+\lambda-1}\Big)^{\lambda-1}\Big\{1+O\Big(\frac 1 {n-m}\Big)\Big\}\\
&=\Big\{1+\frac{(1-\lambda)m}{n+\lambda-1}+O\Big(\frac{m^2}{n^2}\Big)\Big\}\Big\{1+O\Big(\frac 1 {n-m}\Big)\Big\}.
\end{split}
\end{equation*}
Hereafter, taking  $m=[n^\varepsilon]$ with $0<\varepsilon\leq1/2$
yields
\begin{equation}\label{gratioest}
\begin{split}
\frac{g_{n-m}^\lambda}{ g_n^{\lambda}}&=1+(1-\lambda) n^{\varepsilon-1}+O\Big(\frac 1{n-n^\varepsilon} \Big)\\
&\Longrightarrow\;\;
d_{n,m}^\lambda= (\lambda-1) n^{\varepsilon-1}+O(n^{-1}).
\end{split}
\end{equation}
One verifies readily that  for $\lambda>1$ and any $0<\varepsilon
\leq1/2,$
\begin{equation}\label{mlmabda}
\frac{m^{[\lambda]}}{\rho^{2m}}\le \frac 1 n\;\; \Longleftrightarrow \;\; \frac{\ln n}{n^\varepsilon} \le
\frac{2\ln \rho} {1+\varepsilon [\lambda]},
\end{equation}
which, together with \eqref{case1abest} and \eqref{gratioest}, implies
\eqref{mainasymestimate} with $\lambda>1.$

If $-1/2<\lambda<1$ and $\lambda\not=0,$ it follows from \eqref{mlmabda}  that $\rho^{-2m}\le n^{-1}$ for any
 $0<\varepsilon \leq 1/2.$ This validates the desired estimate.
\end{proof}
%The following pictures illustrate the upper bound $A(\rho,
%\lambda)n^{\varepsilon-1}$ with different $\lambda$, $\rho$ and $n$.

%\begin{figure}[!th]
%  \begin{center}
%    \includegraphics[width=.45 \textwidth]{rhov20-500lda-}
%    \includegraphics[width=.45\textwidth]{rhov20-500lda2}
%  %\end{center}
%%\caption{The left figure is the circle $|w|=\rho$, and the right one
%%is the Bernstein ellipse ${\mathcal E}_\rho$ with foci at $\pm1$ by
%%using the mapping $z=\frac 12(w+w^{-1})$.}
%%  \end{figure}
%%
%%  \begin{figure}[!th]
%%  \begin{center}
%    \includegraphics[width=.45 \textwidth]{rhov20-500lda10}
%    \includegraphics[width=.45\textwidth]{rhov20-500lda50}
%  \end{center}
%\caption{All the pictures are plotted with $n=2\,.\,\hat{}\,(3:10)$,
%$\rho=\textrm{linspace}(20,500,10)$ and the dash line is the
%comparison line $n^{\varepsilon-1}$. From the four pictures, we can
%tell, $A(\rho,\lambda)n^{\varepsilon-1}$ increase w.r.t. $\lambda$,
%decrease w.r.t. $\rho$.}
%  \end{figure}

%A consequence of Lemma \ref{lm:asympgk} and Theorem \ref{Th:exactest}  is stated  as follows.
%\begin{cor}\label{theorem1.3}  For any   $z\in {\mathcal E}_\rho$ with $|w|=\rho>1,$ and
%any $\lambda>-1/2$ and $\lambda\not =0,$  we have
%\begin{equation}
%\frac{\Gamma(\lambda)C_n^{\lambda}(z)}{(n+\lambda)^{\lambda-1} w^n}\cong (1-w^{-2})^{-\lambda}.
%\end{equation}
%\end{cor}

A direct consequence of Theorem \ref{Th:exactest} is that
\begin{equation}\label{cnlamdaz}
\lim_{n\to\infty} \frac{C_n^\lambda(z)}{g_n^{\lambda}}=
\lim_{n\rightarrow\infty}\sum_{k=0}^n \frac { g_{n-k}^\lambda }
{g_n^\lambda} g_k^\lambda w^{-2k}=(1-w^{-2})^{-\lambda},
\end{equation}
for all $z\in {\mathcal E}_\rho$ with $|w|=\rho>1,$ and any $\lambda>-1/2$ and $\lambda\not=0.$

\begin{rem}\label{Augaddwang}  Based on a completely different argument,
Elliott  \cite{Elliott71} derived an asymptotic expansion  for large $n$ near $z=1$ (but not near $z=-1$):
$C_n^\lambda(z)\sim \frac{B(n,\lambda)}{(z^2-1)^{\lambda/2}},$ where $B$ is a series involving modified Bessel functions, and some other asymptotic expansions for $|z|$ large and $n$ fixed.  Although they are valid for general $z$ off the interval $[-1,1],$ our results in Theorems \ref{thm3.1} and  \ref{Th:exactest} provide tighter and sharper bounds when $z$ is sitting on ${\mathcal E}_\rho.$
 \qed
\end{rem}

As the end of this section, we provide some numerical results to illustrate the tightness of the upper bound in \eqref{mainasymestimate}.  Denote by
\begin{equation}\label{maxerrbnd}
E_n(\rho;\lambda):=\frac 1 {A(\rho,\lambda)}\max_{z\in {\mathcal E}_\rho} \Big|\big(1-w^{-2}\big)^{-\lambda}-\frac{C_n^{\lambda}(z)}{g_n^{\lambda}
w^n}\Big|.
\end{equation}
To approximate the maximum value, we sample a set of points dense on
the ellipse ${\mathcal E}_\rho$ based on  the conformal mapping
$z=\frac 1 {2}(w+w^{-1})$ of the Fourier points on the circle
$w=\rho e^{\ri\theta}.$ We plot in Figure \ref{figbnd} in (Matlab) log-log scale of
 $E_n(\rho;\lambda), n^{-1}$ and $n^{\varepsilon-1}$ (with $\varepsilon=0.1$) for
 several sets of parameters $\lambda$ and $\rho,$ and for large $n.$
 According to Theorem
\ref{Th:exactest},  $E_n$ should be bounded by $n^{\varepsilon-1}$ from above, and it is anticipated to be bounded below by $n^{-1},$ if the estimate is tight. Indeed, we observe from Figure \ref{figbnd}
such a behavior when $n$ is large.
%Moreover, we tabulate in Table
%\ref{rholambda} some more samples with additional reference to
%$n^\varepsilon.$

\begin{figure}[!th]
  \begin{center}
  % \hspace*{-1.2cm}
    \includegraphics[width=.45 \textwidth]{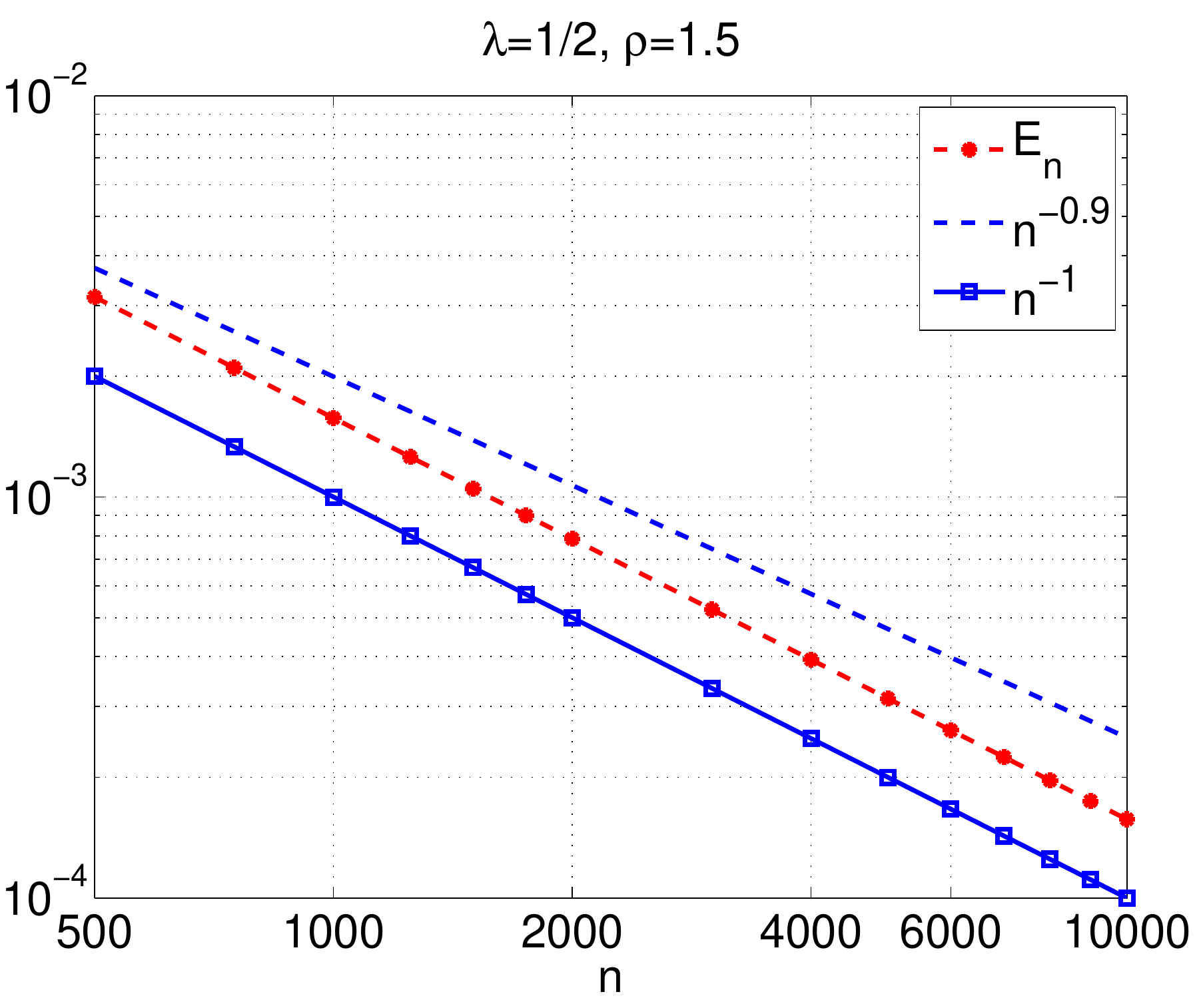} \includegraphics[width=.45 \textwidth]{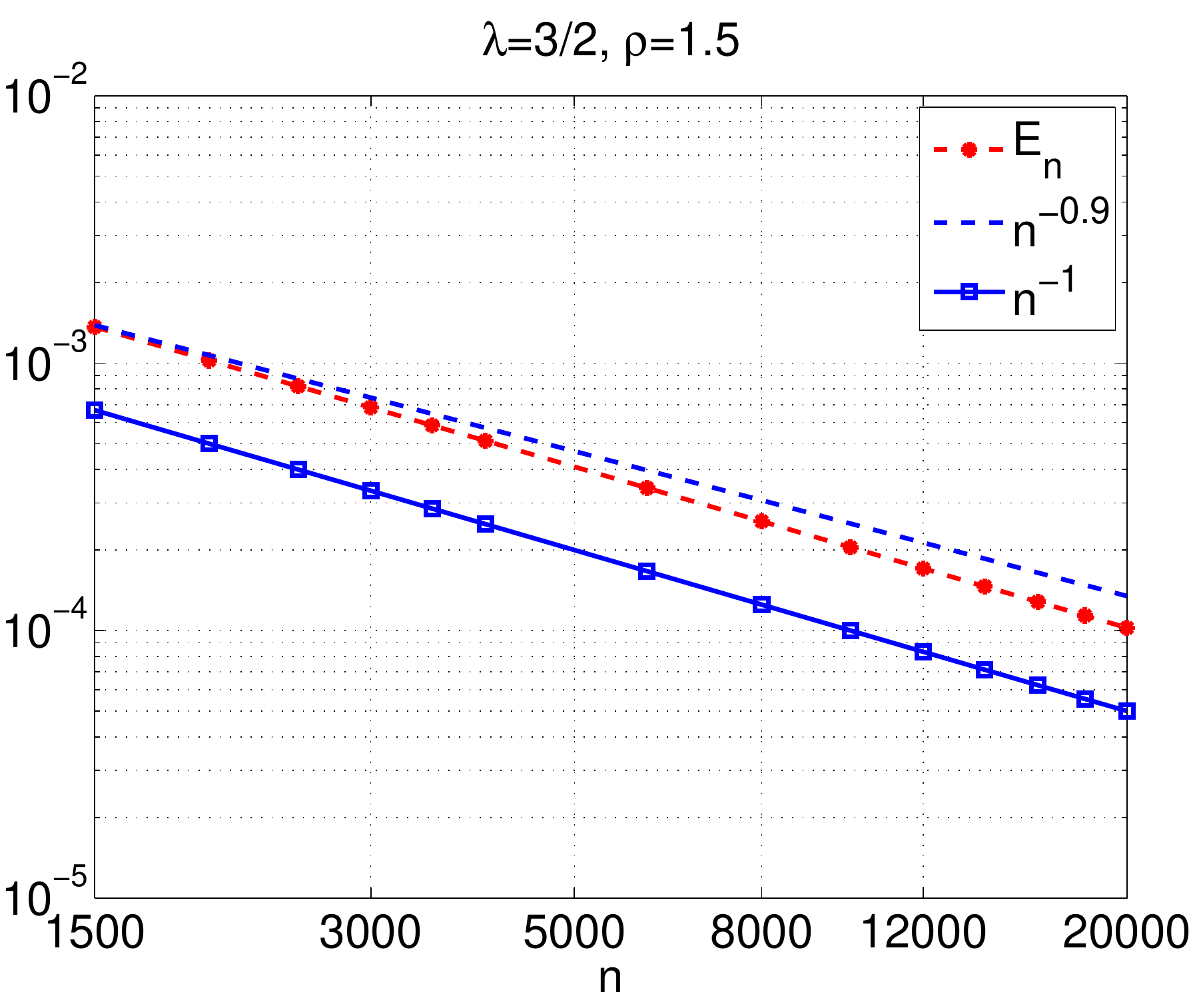}\\
    \includegraphics[width=.45 \textwidth]{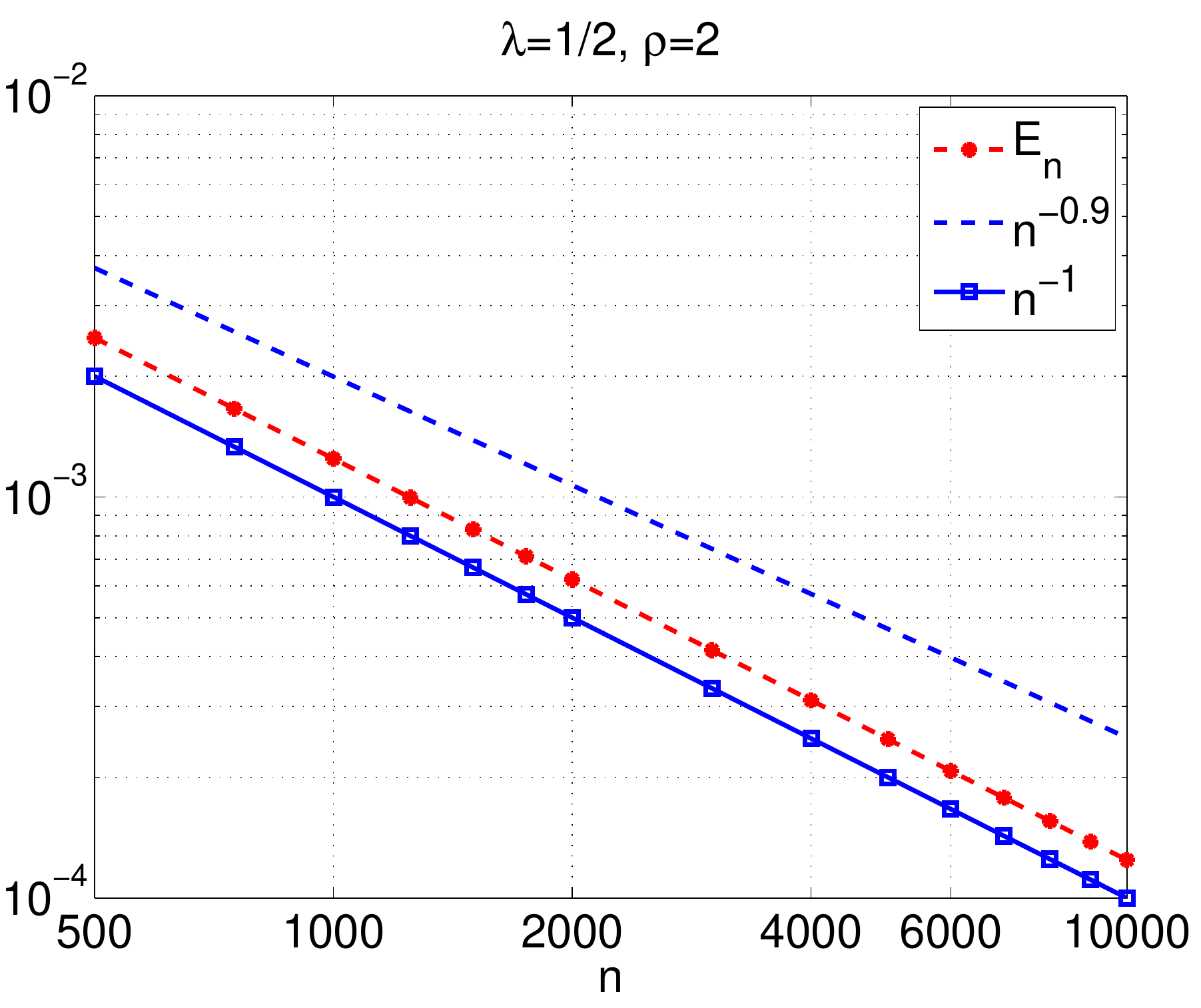} \includegraphics[width=.45 \textwidth]{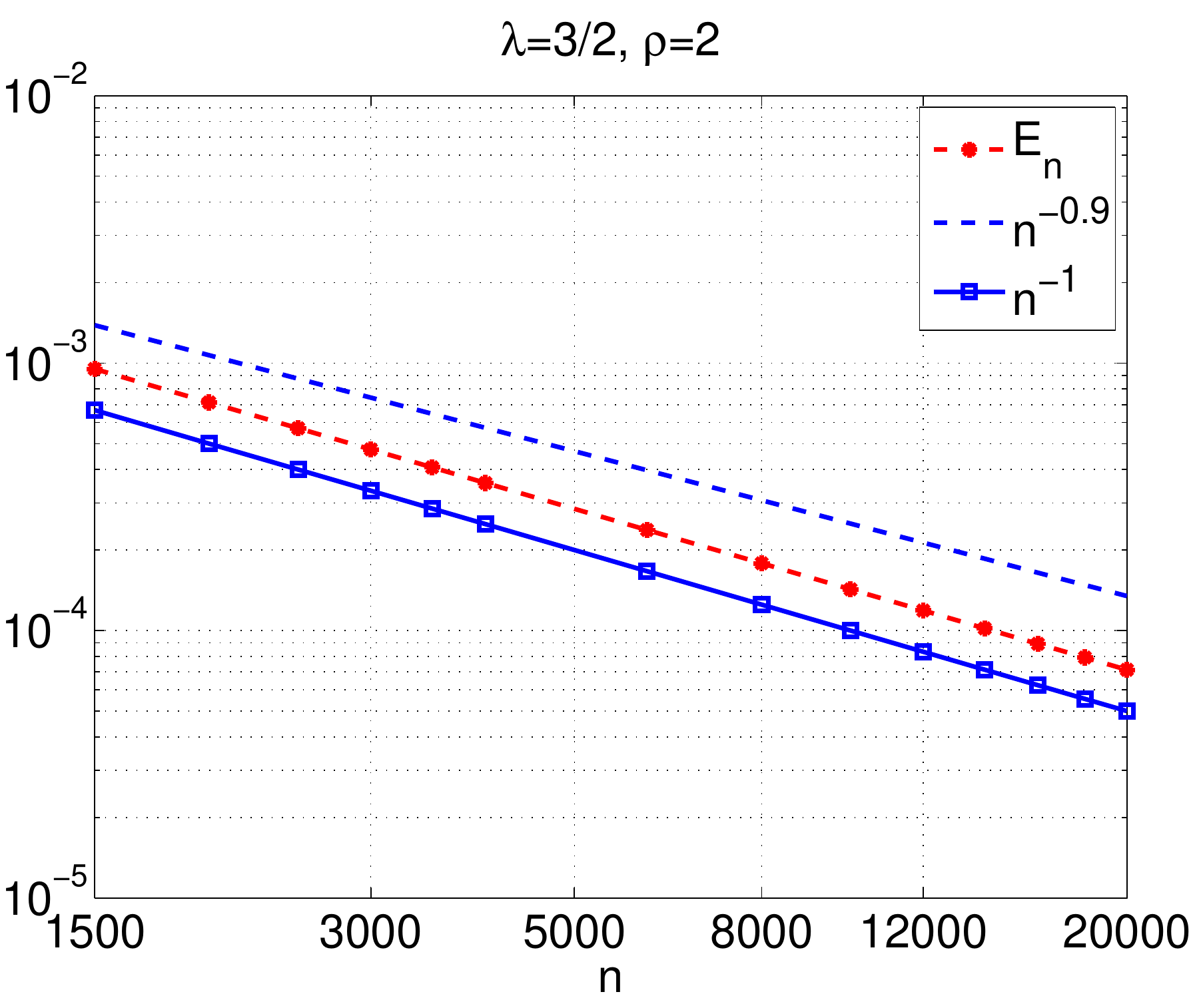}\\
\caption{\small $E_n$ against $n^{\varepsilon-1}$ (with $\varepsilon=0.1$) and
$n^{-1}$ for large $n$.}\label{figbnd}
  \end{center}
  \end{figure}

\section{Error estimates of interpolation and spectral differentiation}

After collecting all the necessary results, we are ready to estimate exponential convergence of interpolation
and spectral differentiation of analytic functions.
% based on the formulas \eqref{Hermite} and \eqref{HermiteSP}.

Hereafter,  the notation  $a_n\cong b_n$ means that  $a_n/b_n\to 1$ as $n\to\infty,$
for any two sequences $\{a_n\}$ and $\{b_n\}$ (with $b_n\not =0$) of complex numbers.

\subsection{Gegenbauer-Gauss interpolation and differentiation}
We start with the analysis of interpolation and spectral differentiation  on  zeros of the
Gegenbauer polynomial $C_{n+1}^\lambda(x).$
\begin{thm}\label{theorem3.1}
Let  $u$ be analytic on and within the ellipse ${\mathcal E_\rho}$
with foci $\pm 1$ and $\rho>1$ as defined in \eqref{Berellips}, and
let $(I_nu)(x)$ be the interpolant of $u(x)$ at the set of $(n+1)$
Gegenbauer-Gauss points.
\begin{itemize}
\item[(i)] If $\lambda>0,$ we have
\begin{equation}\label{intpoestcase1}
\max_{|x|\le 1}\big|(u-I_{n} u)(x)\big|  \le  \frac{c
\Gamma(\lambda)M_{\rho}\sqrt{\rho^2+\rho^{-2}}}{\Gamma(2\lambda)(\rho-1)^2(1+\rho^{-2})^{-\lambda}}
\frac{n^\lambda} {\rho^n}.
\end{equation}
\item[(ii)] If $-1/2<\lambda<0,$ we have
\begin{equation}\label{intpoestcase2}
\max_{|x|\le 1}\big|(u-I_{n} u)(x)\big|  \le  \frac{c D_\lambda
|\Gamma(\lambda)|M_{\rho}\sqrt{\rho^2+\rho^{-2}}}{(\rho-1)^2(1-\rho^{-2})^{-\lambda}}
\frac{1} {\rho^n}.
\end{equation}
\end{itemize}
Here, $M_{\rho}=\max_{z\in{\mathcal E_\rho}}|u(z)|$, $D_\lambda$  is defined in \eqref{Gegenmaxim}, and
$c\cong 1$ is a generic positive constant.
\end{thm}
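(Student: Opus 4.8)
The plan is to start from Hermite's contour integral \eqref{Hermite} with $Q_{n+1}=C_{n+1}^\lambda$, pass to absolute values, and estimate each factor of the integrand over ${\mathcal E}_\rho$ separately. Writing
\[
|(u-I_nu)(x)|\le \frac{1}{2\pi}\oint_{{\mathcal E}_\rho}\frac{|C_{n+1}^\lambda(x)|}{|C_{n+1}^\lambda(z)|}\frac{|u(z)|}{|z-x|}\,|dz|,
\]
I would bound $|u(z)|\le M_\rho$, replace $|z-x|$ by its minimum value $\delta_\rho=\frac12(\rho+\rho^{-1})-1=\frac{(\rho-1)^2}{2\rho}$ from \eqref{drho} (this is the source of the $(\rho-1)^2$ in both denominators), and use the perimeter bound $L({\mathcal E}_\rho)\le\pi\sqrt{\rho^2+\rho^{-2}}$ from \eqref{Lehp}. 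The two remaining tasks are to bound $|C_{n+1}^\lambda(x)|$ from above on $[-1,1]$ and $|C_{n+1}^\lambda(z)|$ from below on ${\mathcal E}_\rho$.

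For the numerator I would invoke \eqref{Gegenmaxim}: when $\lambda>0$, $|C_{n+1}^\lambda(x)|\le C_{n+1}^\lambda(1)=\Gamma(n+1+2\lambda)/\big((n+1)!\,\Gamma(2\lambda)\big)$, and when $-1/2<\lambda<0$, $|C_{n+1}^\lambda(x)|\le D_\lambda (n+1)^{\lambda-1}$ for $n\gg1$. The denominator is the heart of the argument, and I would exploit the sharp asymptotics of Theorem \ref{Th:exactest} (equivalently \eqref{cnlamdaz}), namely $C_{n+1}^\lambda(z)/g_{n+1}^\lambda\cong w^{n+1}(1-w^{-2})^{-\lambda}$ for $z=\frac12(w+w^{-1})\in{\mathcal E}_\rho$. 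Since $|w|=\rho$ this yields $|C_{n+1}^\lambda(z)|\cong |g_{n+1}^\lambda|\,\rho^{n+1}\,|1-w^{-2}|^{-\lambda}$, so I must minimize $|1-w^{-2}|^{-\lambda}$ over the ellipse. A direct computation with $w=\rho e^{\ri\theta}$ gives $|1-w^{-2}|^2=1-2\rho^{-2}\cos2\theta+\rho^{-4}$, with extrema $(1+\rho^{-2})^2$ at $\cos2\theta=-1$ and $(1-\rho^{-2})^2$ at $\cos2\theta=1$. Consequently $\min_\theta|1-w^{-2}|^{-\lambda}=(1+\rho^{-2})^{-\lambda}$ when $\lambda>0$ and $=(1-\rho^{-2})^{-\lambda}$ when $\lambda<0$; this sign-driven switch is exactly what splits the theorem into its two cases and supplies the factors $(1\pm\rho^{-2})^{-\lambda}$ in the denominators.

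Assembling the pieces, the prefactor $\frac{1}{2\pi}\cdot L({\mathcal E}_\rho)\cdot\delta_\rho^{-1}\cdot\rho^{-(n+1)}$ collapses to $\frac{\sqrt{\rho^2+\rho^{-2}}}{(\rho-1)^2}\,\rho^{-n}$. For case (i), the Gamma ratio
\[
\frac{C_{n+1}^\lambda(1)}{g_{n+1}^\lambda}=\frac{\Gamma(\lambda)}{\Gamma(2\lambda)}\frac{\Gamma(n+1+2\lambda)}{\Gamma(n+1+\lambda)}\cong\frac{\Gamma(\lambda)}{\Gamma(2\lambda)}\,n^{\lambda},
\]
obtained from Stirling's formula \eqref{stirling} and the standard asymptotics $\Gamma(x+a)/\Gamma(x+b)\sim x^{a-b}$, produces the $n^\lambda$ growth and gives \eqref{intpoestcase1}. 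For case (ii), writing $|g_{n+1}^\lambda|=\Gamma(n+1+\lambda)/\big((n+1)!\,|\Gamma(\lambda)|\big)\cong n^{\lambda-1}/|\Gamma(\lambda)|$, the ratio $D_\lambda(n+1)^{\lambda-1}/|g_{n+1}^\lambda|\cong D_\lambda|\Gamma(\lambda)|$ cancels the $n$-dependence and leaves the clean $\rho^{-n}$ rate of \eqref{intpoestcase2}. All the $\cong$ approximations, being ratios tending to $1$, are absorbed into the single constant $c\cong1$.

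I expect the one genuinely delicate point to be turning the pointwise asymptotics of Theorem \ref{Th:exactest} into a bound on $|C_{n+1}^\lambda(z)|^{-1}$ that is \emph{uniform} in $z\in{\mathcal E}_\rho$, so that the integration does not destroy the estimate. Fortunately the remainder $A(\rho,\lambda)n^{\varepsilon-1}+O(n^{-1})$ in \eqref{mainasymestimate} is independent of the angle $\theta$, and the limit $|1-w^{-2}|^{-\lambda}$ is bounded away from $0$ on the compact ellipse (since $\rho>1$); hence for $n$ large the remainder is dominated by $\min_\theta|1-w^{-2}|^{-\lambda}$ uniformly, which is precisely what permits the lower bound and the absorption of the $o(1)$ corrections into $c$. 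Verifying this uniformity — especially near the angles where $|1-w^{-2}|$ attains its extreme values — is the step requiring the most care.
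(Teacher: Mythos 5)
Your proposal is correct and follows essentially the same route as the paper's own proof: Hermite's integral \eqref{Hermite} with the bounds \eqref{Lehp}--\eqref{drho}, the numerator bound from \eqref{Gegenmaxim}, and a uniform lower bound on $|C_{n+1}^\lambda(z)|$ over ${\mathcal E}_\rho$ obtained from Theorem \ref{Th:exactest} combined with $1-\rho^{-2}\le |1-w^{-2}|\le 1+\rho^{-2}$, with the sign of $\lambda$ deciding which extreme is relevant. Your explicit computation of the extrema of $|1-w^{-2}|$ and your direct Gamma-ratio asymptotics are only cosmetic variants of the paper's \eqref{wrhobnd} and its separate Stirling estimates \eqref{gkasymp}, \eqref{genCmaxest}.
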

\begin{proof} By the formula \eqref{Hermite} with $Q_{n+1}=C_{n+1}^\lambda$ and
\eqref{Lehp}-\eqref{drho}, we have the bound  of the point-wise error:
\begin{equation}\label{HermiteSP10}
\begin{split}
\big|(u-I_{n} u)(x)\big|&\le \frac{|C_{n+1}^\lambda(x)|}{2\pi}
\frac{\max_{z\in \mathcal E_\rho}|u(z)|}{\min_{z\in{\mathcal
E_\rho}}|C_{n+1}^\lambda (z)|}\oint_{\mathcal E_\rho}
\frac{|dz|}{|z-x|}
\\&
\leq \frac{M_{\rho} L({\mathcal E_\rho}) }{2\pi \delta_\rho}
\frac{|C_{n+1}^\lambda(x)|}
{\min_{z\in{\mathcal E_\rho}}|C_{n+1}^\lambda(z)|}\\
&\le \frac{M_{\rho}\sqrt{\rho^2+\rho^{-2}}}{\rho+\rho^{-1}-2}
\frac{|C_{n+1}^\lambda(x)|}
{\min_{z\in{\mathcal E_\rho}}|C_{n+1}^\lambda(z)|},\quad x\in[-1,1],\; n\ge 0.
\end{split}
\end{equation}
Therefore, it is essential to obtain the lower bound of
$|C_{n+1}^\lambda(z)|.$ Recall that for any two complex numbers
$z_1$ and $z_2,$ we have $\big||z_1|-|z_2|\big|\le |z_1-z_2|.$ It
follows from Theorem \ref{Th:exactest} that
\begin{equation*}
\bigg||1-w^{-2}|^{-\lambda}-\frac{|C_{n+1}^{\lambda}(z)|}{|g_{n+1}^{\lambda}|
\rho^{n+1}}\bigg|\le A(\rho,\lambda)n^{\varepsilon-1}+O(n^{-1}),
\end{equation*}
 which implies
\begin{equation}\label{Cgenbound}
\begin{split}
\big|1-w^{-2}\big|^{-\lambda}&-A(\rho,\lambda)n^{\varepsilon-1}-O(n^{-1})
\le \frac{|C_{n+1}^{\lambda}(z)|}{|g_{n+1}^{\lambda}|
\rho^{n+1}}\\
&\le
\big|1-w^{-2}\big|^{-\lambda}+A(\rho,\lambda)n^{\varepsilon-1}+O(n^{-1}).
\end{split}
\end{equation}
Notice that
\begin{equation}\label{wrhobnd}
1-\rho^{-2}\leq|1-w^{-2}|\leq 1+\rho^{-2}.
\end{equation}
Consequently,
\begin{equation}\label{newconse}
|C_{n+1}^\lambda(z)|\ge
c\frac{n^{\lambda-1}\rho^{n+1}}{|\Gamma(\lambda)|}
\begin{cases}
(1+\rho^{-2})^{-\lambda},\quad & {\rm if}\;\; \lambda>0,\\
(1-\rho^{-2})^{-\lambda},\quad & {\rm if}\;\; \lambda<0,
  \end{cases}
\end{equation}
where we used \eqref{gkasymp}, and  the constant $c\cong 1.$

On the other hand, we derive from \eqref{PropG1}, \eqref{Gegenmaxim} and \eqref{stirling} that if $\lambda>0,$
\begin{equation}\label{genCmaxest}
\max_{|x|\le 1}|C_{n+1}^\lambda(x)|=C_{n+1}^\lambda(1)\cong \frac{n^{2\lambda-1}}{\Gamma(2\lambda)}.
\end{equation}
Hence,  a combination of \eqref{HermiteSP10}, \eqref{newconse} and
\eqref{genCmaxest} leads to \eqref{intpoestcase1}.  Similarly, for
$-1/2<\lambda<0,$ we use \eqref{Gegenmaxim} to derive
\eqref{intpoestcase2}.
\end{proof}

\begin{rem}\label{ptwisecom} For  $\lambda>0,$ we obtain from \eqref{gammafd}, \eqref{sharpest} and \eqref{stirling} that
\begin{equation}\label{pointmax}
|C_{n+1}^\lambda(x)|\le \frac{c 2^{1-\lambda}\sqrt{e(2+\sqrt 2 \lambda)}}{\Gamma(\lambda)} n^{\lambda-1} (1-x^2)^{-\lambda/2},\;\; |x|<1.
\end{equation}
Replacing \eqref{genCmaxest} by this bound in the above proof,  we can derive the point-wise estimate for $\lambda>0:$
\begin{equation}\label{pintgauss}
\big|(u-I_{n} u)(x)\big|\le D(\rho, \lambda)
\frac{(1-x^2)^{-\lambda/2}}{\rho^n}, \quad |x|<1,
\end{equation}
where  the positive constant $D(\rho, \lambda)$ can be worked out as
well. It appears to be sharper than \eqref{intpoestcase1} at the
points which are not too close to the endpoints $x=\pm 1.$ A similar
remark also applies to the Gegenbauer-Gauss-Lobatto interpolation to
be addressed in a minute. \qed
\end{rem}

%\begin{rem} Comparison with spectral expansion!
%\end{rem}

Now, we turn to the estimate of spectral differentiation.
\begin{thm}\label{Th:spectraldiff}
Let  $u$ be analytic on and within the ellipse ${\mathcal E_\rho}$
with foci $\pm 1$ and $\rho>1$ as defined in \eqref{Berellips}, and
let $(I_nu)(x)$ be the interpolant of $u(x)$ at $(n+1)$
Gegenbauer-Gauss points $\{x_j\}_{j=0}^n$. Then we have
\begin{equation}\label{diffestest}
\max_{0\le j\le n}\big|(u-I_{n} u)'(x_j)\big| \le
\Lambda(\rho,\lambda) \frac{n^{\lambda+2}} {\rho^n},
\end{equation}
where the constant
\begin{equation}\label{constTh}
\Lambda(\rho,\lambda)=\frac{2c
\Gamma(\lambda+1)M_{\rho}\sqrt{\rho^2+\rho^{-2}}}{\Gamma(2\lambda+2)(\rho-1)^2}
 \begin{cases}
 (1+\rho^{-2})^{\lambda},\quad & {\rm if}\;\; \lambda>0,\\
 (1-\rho^{-2})^{\lambda},\quad & {\rm if}\;\; \lambda<0,
  \end{cases}
\end{equation}
and $c, M_\rho$ are the same as in {\rm Theorem  \ref{theorem3.1}}.
\end{thm}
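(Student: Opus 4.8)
The plan is to mirror the proof of Theorem \ref{theorem3.1}, replacing the Hermite integral \eqref{Hermite} by its differentiated counterpart \eqref{HermiteSP}. Setting $Q_{n+1}=C_{n+1}^\lambda$ and taking moduli in \eqref{HermiteSP}, I would first bound the contour integral by $\oint_{\mathcal E_\rho}|dz|/|z-x_j|\le L(\mathcal E_\rho)/\delta_\rho$, since $|z-x_j|\ge\delta_\rho$ for every $z\in\mathcal E_\rho$ and $x_j\in[-1,1]$. Together with \eqref{Lehp}--\eqref{drho} this gives
\[
\big|(u-I_{n}u)'(x_j)\big|\le \frac{M_\rho\sqrt{\rho^2+\rho^{-2}}}{\rho+\rho^{-1}-2}\,\frac{|(C_{n+1}^\lambda)'(x_j)|}{\min_{z\in\mathcal E_\rho}|C_{n+1}^\lambda(z)|},
\]
so the whole argument reduces to an upper bound for the numerator and a lower bound for the denominator, exactly as before. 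The denominator is already controlled by \eqref{newconse}, which supplies $\min_{z\in\mathcal E_\rho}|C_{n+1}^\lambda(z)|\ge c\,n^{\lambda-1}\rho^{n+1}|\Gamma(\lambda)|^{-1}$ times $(1+\rho^{-2})^{-\lambda}$ or $(1-\rho^{-2})^{-\lambda}$ according to the sign of $\lambda$.

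The only genuinely new ingredient is the numerator $|(C_{n+1}^\lambda)'(x_j)|$. Here I would use the differentiation identity \eqref{PropG2} to write $(C_{n+1}^\lambda)'(x)=2\lambda C_n^{\lambda+1}(x)$, so that $|(C_{n+1}^\lambda)'(x_j)|=2|\lambda|\,|C_n^{\lambda+1}(x_j)|$. The pleasant point is that the shifted index $\lambda+1$ is $>1/2>0$ for every admissible $\lambda>-1/2$, so regardless of the sign of $\lambda$ the first branch of \eqref{Gegenmaxim} applies and $|C_n^{\lambda+1}(x_j)|\le C_n^{\lambda+1}(1)$. Evaluating \eqref{PropG1} at the shifted index and invoking Stirling's formula \eqref{stirling} then yields $C_n^{\lambda+1}(1)=\Gamma(n+2\lambda+2)/(n!\,\Gamma(2\lambda+2))\cong n^{2\lambda+1}/\Gamma(2\lambda+2)$, hence $|(C_{n+1}^\lambda)'(x_j)|\le 2|\lambda|\,C_n^{\lambda+1}(1)\cong 2|\lambda|\,n^{2\lambda+1}/\Gamma(2\lambda+2)$.

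Dividing the numerator estimate by the lower bound \eqref{newconse} produces a factor $n^{2\lambda+1}/n^{\lambda-1}=n^{\lambda+2}$ and $\rho^{-(n+1)}$; absorbing the surviving $\rho+\rho^{-1}-2=(\rho-1)^2/\rho$ into the geometric prefactor turns $\rho/\rho^{n+1}$ into $\rho^{-n}$ and supplies the $(\rho-1)^{-2}$ in $\Lambda(\rho,\lambda)$. The constant then matches \eqref{constTh} after the Gamma-function bookkeeping: for $\lambda>0$ one uses $2\lambda\,\Gamma(\lambda)=2\Gamma(\lambda+1)$, while for $-1/2<\lambda<0$ one notes $\Gamma(\lambda)<0$ with $|\Gamma(\lambda)|=\Gamma(\lambda+1)/|\lambda|$, so that $2|\lambda|\,|\Gamma(\lambda)|=2\Gamma(\lambda+1)$ again, and the sign of $\lambda$ only selects between $(1+\rho^{-2})^{\lambda}$ and $(1-\rho^{-2})^{\lambda}$. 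Collecting terms gives \eqref{diffestest}. I expect no conceptual difficulty beyond Theorem \ref{theorem3.1}; the main thing to watch is the constant bookkeeping, in particular keeping the generic $c\cong1$ honest across the Stirling asymptotics and making sure the maximum bound is applied at the shifted index $\lambda+1$ (always positive) rather than at $\lambda$.
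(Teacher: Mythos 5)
Your proposal is correct and follows essentially the same route as the paper's own proof: the differentiated Hermite integral \eqref{HermiteSP} bounded via $L({\mathcal E}_\rho)/\delta_\rho$, the identity \eqref{PropG2} combined with the maximum bound at the shifted (always positive) index $\lambda+1$ to control $|(C_{n+1}^\lambda)'(x_j)|\cong 2|\lambda|\,n^{2\lambda+1}/\Gamma(2\lambda+2)$, and the lower bound \eqref{newconse} for the denominator. Your constant bookkeeping, including $2|\lambda|\,|\Gamma(\lambda)|=2\Gamma(\lambda+1)$ for both signs of $\lambda$ and the absorption of $\rho+\rho^{-1}-2=(\rho-1)^2/\rho$, reproduces \eqref{constTh} exactly.
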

\begin{proof} In view of \eqref{Hermite} and \eqref{HermiteSP}, it is enough to replace $x$ and
$C_{n+1}^\lambda(x)$ by $x_j$ and $\frac d {dx} C_{n+1}^\lambda(x),$ respectively, in \eqref{HermiteSP10}. Thus, we have
\begin{equation}\label{HermiteSP10a}
\begin{split}
\big|(u-I_{n} u)'(x_j)\big|&\le \frac {M_\rho}
{2\pi}
\frac{|(C_{n+1}^\lambda)'(x_j)|}{\min_{z\in\mathcal E_\rho}|C_{n+1}^\lambda (z)|}\oint_{\mathcal E_\rho} \frac{|dz|}{|z-x_j|}\\
&\le \frac{M_{\rho}\sqrt{\rho^2+\rho^{-2}}}{\rho+\rho^{-1}-2}
\frac{|(C_{n+1}^\lambda)'(x_j)|}
{\min_{z\in{\mathcal E_\rho}}|C_{n+1}^\lambda(z)|},\quad 0\le j\le n.
\end{split}
\end{equation}
By \eqref{PropG2} and \eqref{genCmaxest},
\begin{equation}\label{ddxGenmax}
\max_{|x|\le
1}\big|(C_{n+1}^\lambda)'(x)\big|=2|\lambda||C_n^{\lambda+1}(1)|\cong
\frac{2|\lambda|}{\Gamma(2\lambda+2)} n^{2\lambda+1},
\end{equation}
which, together with \eqref{newconse} and \eqref{HermiteSP10a}, leads to the desired estimate.
\end{proof}

\begin{rem}\label{highorderderiv} Obviously, by \eqref{Hermite},
\begin{equation}\label{Hermiteaa}
(u-I_{n} u)'(x)=\frac{1}{2\pi \ri}\oint_{{\mathcal E}_\rho} \Big(
\frac{(C_{n+1}^\lambda)'(x)}{z-x}+\frac{C_{n+1}^\lambda(x)}{(z-x)^2}\Big)\frac{u(z)}{C_{n+1}^\lambda(z) } dz.
\end{equation}
If $x\not= x_j,$ we need to estimate the second term in the summation, which can be done in the same fashion in the proof of Theorem \ref{theorem3.1}. The first term  is actually estimated above.  Consequently, we have
\[
\max_{|x|\le 1}\big|(u-I_{n} u)'(x)\big|\le  \Lambda(\rho,\lambda) \frac{n^{\lambda+2}} {\rho^n}+\frac {1}{\delta_\rho}
\max_{|x|\le 1}\big|(u-I_{n} u)(x)\big|,
\]
where $\delta_\rho$ is given by \eqref{drho}.
Hence, by  Theorem \ref{theorem3.1},
\begin{equation}\label{diffestestaaa}
\max_{|x|\le 1}\big|(u-I_{n} u)'(x)\big| = O\Big(\frac{n^{\lambda+2}} {\rho^n}\Big).
\end{equation}
In fact, the results for  higher-order derivatives can be derived recursively, and it is anticipated that
\begin{equation}\label{diffestestaaab}
\max_{|x|\le 1}\big|(u-I_{n} u)^{(k)}(x)\big| = O\Big(\frac{n^{\lambda+2k}} {\rho^n}\Big),\quad k\ge 1.
\end{equation}
A similar remark applies to the Gegenbauer-Gauss-Lobatto case below. \qed
 \end{rem}

\subsection{Gegenbauer-Gauss-Lobatto interpolation and differentiation}
We are now in a position to estimate the Gegenbauer-Gauss-Lobatto interpolation and spectral differentiation. In this case,  $Q_{n+1}(x)=(1-x^2)C_{n-1}^{\lambda+1}(x)$ in  \eqref{Hermite}-\eqref{HermiteSP}.  The main result is stated as follows.
\begin{thm}\label{theorem3.3}
Let  $u$ be analytic on and within the ellipse ${\mathcal E_\rho}$
with foci $\pm 1$ and $\rho>1$ as defined in \eqref{Berellips}, and
let $(I_nu)(x)$ be the interpolant of $u(x)$ at the set of $(n+1)$
Gegenbauer-Gauss-Lobatto points.
\begin{itemize}
\item[(a)] We have the interpolation error:
\begin{equation}\label{Hermitepwerr}
\max_{|x|\le 1}\big|(u-I_{n} u)(x)\big|  \le  \frac{4c
M_{\rho}\sqrt{\rho^2+\rho^{-2}}(1+\rho^{-2})^{\lambda+1}}{(1-\rho^{-1})^2(\rho-\rho^{-1})^2}\frac{\Gamma(\lambda+1)}{\Gamma(2\lambda+2)}
\frac{n^{\lambda+1}} {\rho^{n}}.
\end{equation}
\item[(b)] We have the estimate:
\begin{equation}\label{Hermiteesperr}
\begin{split}
&\max_{0\le j\le n}\big|(u-I_{n} u)'(x_j)\big| \le  \frac{8c
M_{\rho}\sqrt{\rho^2+\rho^{-2}}(1+\rho^{-2})^{\lambda+1}}{(1-\rho^{-1})^2(\rho-\rho^{-1})^2}
\frac{\Gamma(\lambda+2)} {\Gamma(2\lambda+4)}\frac{n^{\lambda+3}}
{\rho^{n}}.
\end{split}
\end{equation}
\end{itemize}
Here,  $c\cong 1$  and  $ M_\rho=\max_{z\in {\mathcal E}_\rho}|u(z)|.$
\end{thm}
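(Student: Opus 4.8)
The plan is to follow the same template as the proofs of Theorems \ref{theorem3.1} and \ref{Th:spectraldiff}, now with the Gauss--Lobatto nodal polynomial $Q_{n+1}(x)=(1-x^2)C_{n-1}^{\lambda+1}(x)$. Starting from Hermite's formula \eqref{Hermite} for part (a) and its differentiated form \eqref{HermiteSP} for part (b), together with the perimeter and distance bounds \eqref{Lehp}--\eqref{drho}, both estimates reduce to controlling the ratio $\max_{|x|\le 1}|Q_{n+1}(x)|/\min_{z\in\mathcal E_\rho}|Q_{n+1}(z)|$, respectively $\max_{0\le j\le n}|Q_{n+1}'(x_j)|/\min_{z}|Q_{n+1}(z)|$. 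At the outset I would record the algebraic identity $\rho+\rho^{-1}-2=\rho(1-\rho^{-1})^2$, which is precisely what produces the factor $(1-\rho^{-1})^2$ in the denominators of \eqref{Hermitepwerr}--\eqref{Hermiteesperr}.

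The crux is the lower bound for $|Q_{n+1}(z)|$ on the ellipse, and here I would factor $|Q_{n+1}(z)|=|1-z^2|\,|C_{n-1}^{\lambda+1}(z)|$ and treat the two factors separately. Writing $z=\frac12(w+w^{-1})$ gives $1-z^2=-\frac14(w-w^{-1})^2$, whence $|1-z^2|=\frac14|w-w^{-1}|^2\ge\frac14(\rho-\rho^{-1})^2$ uniformly on $\mathcal E_\rho$. For the Gegenbauer factor, the key observation is that the shifted index $\lambda+1$ lies in the admissible range $\lambda+1>1/2>0$ for every $\lambda>-1/2$, so Theorem \ref{Th:exactest} applies verbatim with index $\lambda+1$ and degree $n-1$; combined with \eqref{wrhobnd} and the asymptotics $g_{n-1}^{\lambda+1}\cong n^\lambda/\Gamma(\lambda+1)$ from Lemma \ref{lm:asympgk}, this yields $\min_{z}|C_{n-1}^{\lambda+1}(z)|\ge c\,\Gamma(\lambda+1)^{-1}n^\lambda\rho^{n-1}(1+\rho^{-2})^{-(\lambda+1)}$ with $c\cong 1$, exactly as in \eqref{newconse}. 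Multiplying the two lower bounds (legitimate since both are nonnegative) furnishes the denominator, contributing the factor $(1+\rho^{-2})^{\lambda+1}$ after it is moved to the numerator.

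For the numerators I would exploit $|1-x^2|\le 1$ on $[-1,1]$ together with the maximum bound \eqref{Gegenmaxim}. For (a), $|Q_{n+1}(x)|\le C_{n-1}^{\lambda+1}(1)$, which by \eqref{PropG1} and Stirling \eqref{stirling} is $\cong n^{2\lambda+1}/\Gamma(2\lambda+2)$. For (b), the product rule and \eqref{PropG2} give $Q_{n+1}'(x)=-2x\,C_{n-1}^{\lambda+1}(x)+2(\lambda+1)(1-x^2)C_{n-2}^{\lambda+2}(x)$; both terms are bounded via \eqref{Gegenmaxim}, and the second dominates, scaling as $C_{n-2}^{\lambda+2}(1)\cong n^{2\lambda+3}/\Gamma(2\lambda+4)$. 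Assembling numerator over denominator then produces the powers $n^{\lambda+1}$ and $n^{\lambda+3}$ respectively; the stray factor $\rho$ from $\rho^{-(n-1)}$ cancels against the $\rho^{-1}$ in $\rho(1-\rho^{-1})^2$, leaving $\rho^{-n}$, and collecting the Gamma factors through $\Gamma(\lambda+2)=(\lambda+1)\Gamma(\lambda+1)$ reproduces the stated constants.

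The main obstacle I anticipate is bookkeeping rather than conceptual: keeping the degree and index shifts ($n-1,\lambda+1$ for the value, and $n-2,\lambda+2$ for the dominant derivative term) consistent through the Stirling asymptotics so that the exponents of $n$ and the Gamma-function constants emerge correctly, and verifying that every lower-order correction is harmlessly absorbed into the generic constant $c\cong 1$. The only genuinely new ingredient beyond the Gauss case is the endpoint weight $(1-x^2)$, which is benign in the numerator (being $\le 1$) but must be lower-bounded on the ellipse; this is settled cleanly by $|1-z^2|=\frac14|w-w^{-1}|^2\ge\frac14(\rho-\rho^{-1})^2$, from which the factor $(\rho-\rho^{-1})^2$ in the denominators follows.
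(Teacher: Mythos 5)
Your proposal is correct and follows essentially the same route as the paper's own proof: Hermite's formula with $Q_{n+1}(x)=(1-x^2)C_{n-1}^{\lambda+1}(x)$, the lower bound $|1-z^2|\ge \frac14(\rho-\rho^{-1})^2$ on ${\mathcal E}_\rho$ combined with the shifted-parameter version of \eqref{newconse} for the denominator, and \eqref{Gegenmaxim}, \eqref{PropG2} with the product rule for the numerators, with the dominant term $C_{n-2}^{\lambda+2}(1)\cong n^{2\lambda+3}/\Gamma(2\lambda+4)$ in part (b). Your explicit bookkeeping via $\rho+\rho^{-1}-2=\rho(1-\rho^{-1})^2$ and the observation that $\lambda+1>0$ always (so only one branch of \eqref{newconse} is needed) are exactly the details the paper leaves implicit.
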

\begin{proof} For any $z\in {\mathcal E}_\rho,$  one  verifies that
\[
\frac 14(\rho-\rho^{-1})^2\leq |z^2-1|\leq\frac 14(\rho+\rho^{-1})^2,
\]
and
\begin{equation}\label{minCestGL}
\min_{z\in{\mathcal
E_\rho}}\big|(1-z^2)C_{n-1}^{\lambda+1}(z)\big|\geq \frac
14(\rho-\rho^{-1})^2\min_{z\in{\mathcal
E_\rho}}\big|C_{n-1}^{\lambda+1}(z)\big|.
\end{equation}

(a)~ By  \eqref{Hermite} with
$Q_{n+1}(x)=(1-x^2)C_{n-1}^{\lambda+1}(x)$ and
\eqref{Lehp}-\eqref{drho}, we have the bound  of the point-wise
error:
\begin{equation}\label{HermiteSPGGL1}
\begin{split}
\big|(u-I_{n} u)(x)\big|&\le
\frac{\big|(1-x^2)C_{n-1}^{\lambda+1}(x)\big|}{2\pi}
\frac{\max_{z\in \mathcal E_\rho}|u(z)|}{\min_{z\in{\mathcal
E_\rho}}\big|(1-z^2)C_{n-1}^{\lambda+1}(z)\big|}\oint_{\mathcal
E_\rho}
\frac{|dz|}{|z-x|}\\
&\overset{\re{minCestGL}}\le
\frac{M_{\rho}\sqrt{\rho^2+\rho^{-2}}}{\rho+\rho^{-1}-2}
\frac{4(\rho-\rho^{-1})^{-2}|C_{n-1}^{\lambda+1}(x)|}
{\min_{z\in{\mathcal E_\rho}}|C_{n-1}^{\lambda+1}(z)|},\quad
x\in[-1,1],\; n\ge 0.
\end{split}
\end{equation}
Thus, the estimate \eqref{Hermitepwerr} follows from  \eqref{newconse} and \eqref{genCmaxest}.
%\begin{equation}\label{HermiteSPGGL2}
%%\begin{split}
%\max_{|x|\le 1}\big|(u-I_{n} u)(x)\big|\leq \frac{4c
%\Gamma(\lambda+1)M_{\rho}\sqrt{\rho^2+\rho^{-2}}(1+\rho^{-2})^{\lambda+1}}{\Gamma(2\lambda+2)(1-\rho^{-1})^2(\rho-\rho^{-1})^2}
%\frac{n^{\lambda+1}} {\rho^{n}},
%\end{equation}
%which is the desired result.

(b)~Similarly, we have
\begin{equation}\label{HermiteSPGGL3}
\begin{split}
\big|(u-I_{n} u)'(x_j)\big|&\le \frac {M_\rho} {2\pi}
\frac{\big|[(1-x^2)C_{n-1}^{\lambda+1}(x)]'(x_j)\big|}{\min_{z\in\mathcal
E_\rho}\big|(1-z^2)C_{n-1}^{\lambda+1}(z)\big|}\oint_{\mathcal
E_\rho}
\frac{|dz|}{|z-x_j|}\\
&\le \frac{4M_{\rho}\sqrt{\rho^2+\rho^{-2}}}{\rho+\rho^{-1}-2}
\frac{\big|[(1-x^2)C_{n-1}^{\lambda+1}(x)]'(x_j)\big|}
{(\rho-\rho^{-1})^2\min_{z\in{\mathcal
E_\rho}}\big|C_{n-1}^{\lambda+1}(z)\big|},\quad 0\le j\le n.
\end{split}
\end{equation}
A direct calculation leads to
\begin{equation}\label{Lobattod}
\big[(1-x^2)C_{n-1}^{\lambda+1}(x)\big]'=-2xC_{n-1}^{\lambda+1}(x)+(1-x^2)\big[C_{n-1}^{\lambda+1}(x)\big]',
\end{equation}
which, together with \eqref{genCmaxest} and \eqref{ddxGenmax}, gives
%\begin{equation}\label{HermiteSPGGL4}
%\max\limits_{|x|\leq
%1}\big|[(1-x^2)C_{n-1}^{\lambda+1}(x)]'\big|\cong
%\frac{2n^{2\lambda+1}}{\Gamma(2\lambda+2)}+\frac{2(\lambda+1)n^{2\lambda+3}}{\Gamma(2\lambda+4)}.
%\end{equation}
\begin{equation}\label{HermiteSPGGL4}
\max\limits_{|x|\leq
1}\big|[(1-x^2)C_{n-1}^{\lambda+1}(x)]'\big|\cong
\frac{2(\lambda+1)}{\Gamma(2\lambda+4)}n^{2\lambda+3}.
\end{equation}
A combination of \eqref{newconse}, \eqref{HermiteSPGGL3} and
\eqref{HermiteSPGGL4} yields the desired estimate.
\end{proof}

\begin{rem}\label{GGLrem}  Similar to  Remarks \ref{ptwisecom}   and  \ref{highorderderiv},
we can derive a sharper  point-wise estimate and analyze higher-order derivatives of interpolation errors.  \qed
\end{rem}

\subsection{Analysis of quadrature errors}
 Recall the interpolatory
Gegenbauer-Gauss-type quadrature formula:
\begin{equation}\label{formulass}
\int_{-1}^1 u(x) (1-x^2)^{\lambda-1/2} dx \approx  \sum_{j=0}^n u(x_j) \omega_j=
\int_{-1}^1 (I_nu)(x) (1-x^2)^{\lambda-1/2} dx,
\end{equation}
where the quadrature weights $\{\omega_j\}_{j=0}^n$  are expressed by the Lagrange basis polynomials (see, e.g., \cite{szeg75,Funa92}).
Observe that
\begin{equation}\label{formulaerr}
\Big|\int_{-1}^1 (u-I_nu)(x) (1-x^2)^{\lambda-1/2} dx\Big|\le h_0^{\lambda} \max_{|x|\le 1}|(u-I_{n} u)(x)|,
\end{equation}
where $h_0^\lambda$ is given by \eqref{gammafd}.  With the aid of interpolation
error estimates  in Theorem \ref{theorem3.1} and Theorem
\ref{theorem3.3}, we are able to derive the quadrature errors immediately.

\subsection{Numerical results}
In what follows, we provide two numerical examples to demonstrate the sharpness of the estimates established in
Theorems \ref{Th:spectraldiff} and \ref{theorem3.3}.

\subsubsection{Example 1}  We take
 \begin{equation}\label{test1}
u(x)=\frac 1 {x^2+1},
\end{equation}
which has two simple poles at $\pm \ri.$ By \eqref{semiaxis}, we are free to  choose
 ${\mathcal E}_\rho$ with $\rho$ in the range
\begin{equation}\label{achoos}
1<\rho<1+\sqrt 2 \approx 2.414,
\end{equation}
such that $u$ is analytic on and within ${\mathcal E}_\rho.$ To
compare the (discrete) maximum error of spectral differentiation
with the upper bound, we sample about $2000$ values of $\rho$
equally from $(1, 1+\sqrt 2),$ and find a tighter upper bound (which
is usually attained when $\rho$ is close to  $1+\sqrt 2$\,). In
Figure \ref{figbnd2} (a)-(b), we plot the (discrete) maximum errors
of spectral differentiation against the upper bounds. We visualize
the exponential decay of the errors, and the upper bounds and the
errors decay at almost the same rate. Moreover, it seems that the
bounds are slightly sharper in the Gegenbauer-Gauss case.

\subsubsection{Example 2} The  estimates indicates that the errors essentially depend on the location of singularity (although it affects the constant $M_{\rho}$) rather than the behavior of the singularity. To show this, we test the function with poles at $\pm \ri$ of order $2:$
\begin{equation}\label{test2}
u(x)=\frac 1 {(x^2+1)^2}.
\end{equation}
 We plot in Figure  \ref{figbnd2} (c)-(d) the errors and upper bounds as in (a)-(b). Indeed, a similar convergence behavior is observed. Indeed,
 Boyd  \cite{Boyd01} pointed out that the type of singularity might change the rate of convergence by a power of $n,$ but not an exponential function of $n.$

\begin{figure}[!t]
\subfigure[Upper bound vs Max.-error (Example 1)]{
\begin{minipage}[t]{0.47\textwidth}
\centering
\rotatebox[origin=cc]{-0}{\includegraphics[width=2.5in]{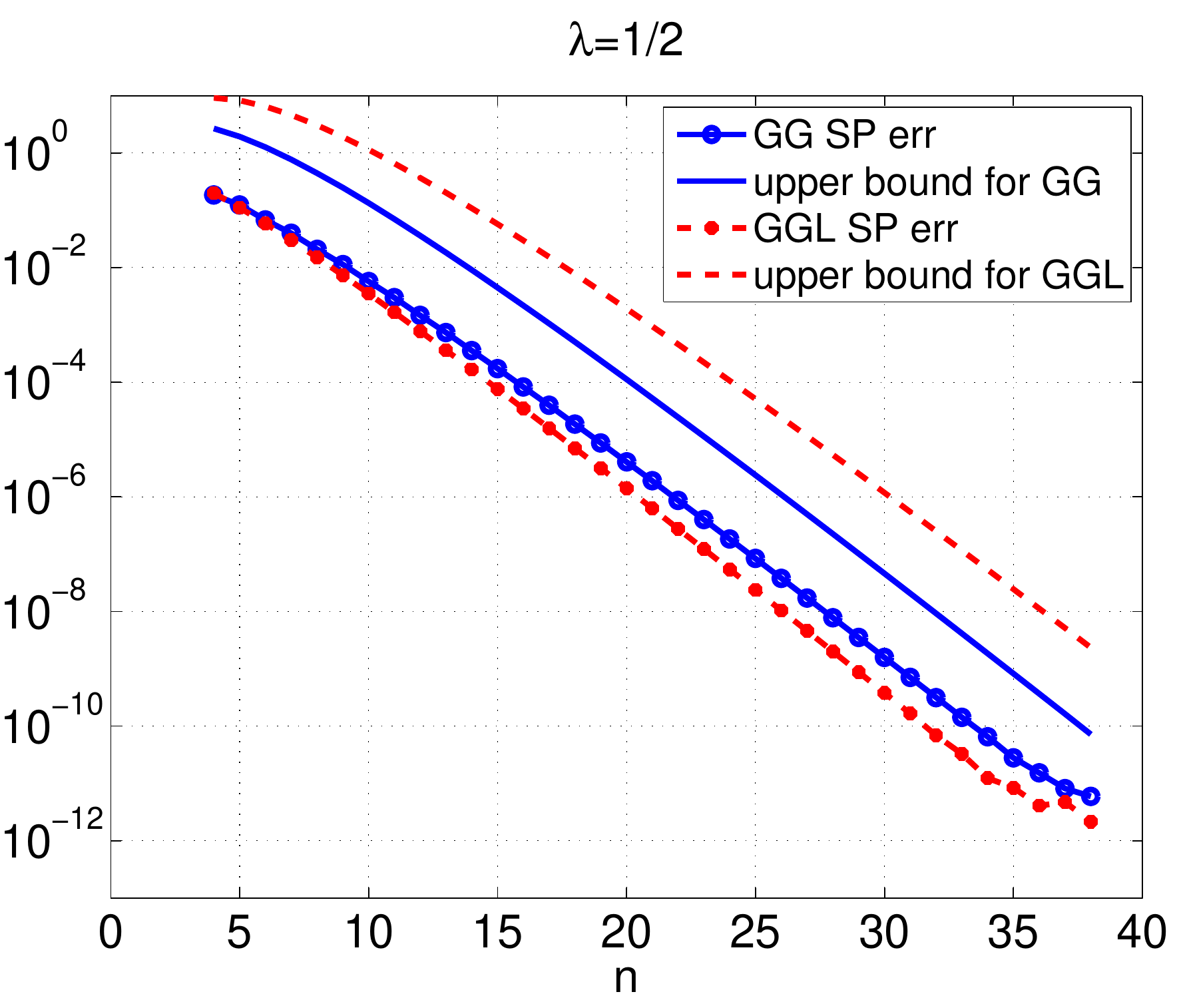}}
\end{minipage}}%
\subfigure[Upper bound vs Max.-error (Example 1)]{
\begin{minipage}[t]{0.47\textwidth}
\centering
\rotatebox[origin=cc]{-0}{\includegraphics[width=2.5in]{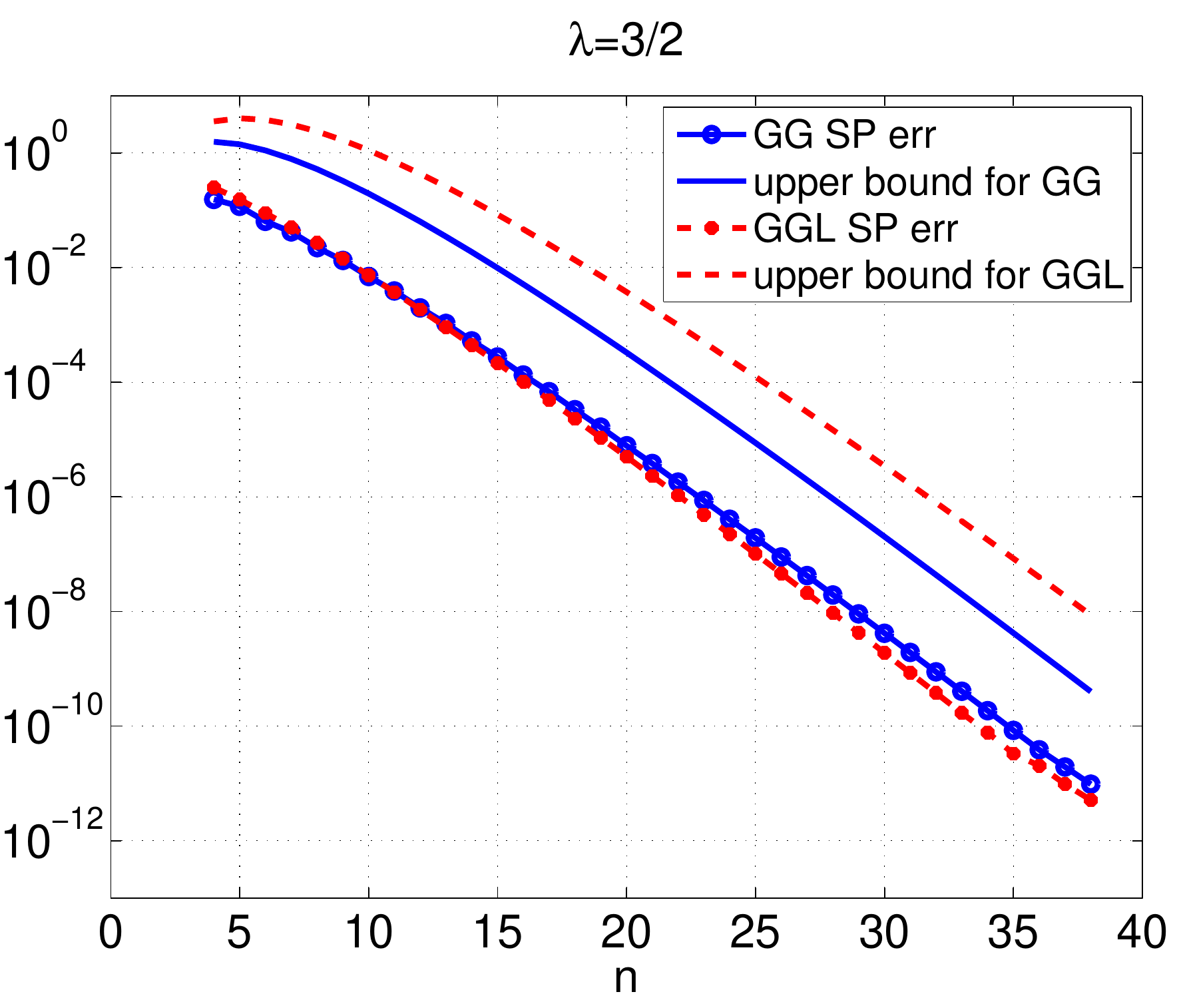}}
\end{minipage}}%

\subfigure[Upper bound vs Max.-error (Example 2)]{
\begin{minipage}[t]{0.47\textwidth}
\centering
\rotatebox[origin=cc]{-0}{\includegraphics[width=2.5in]{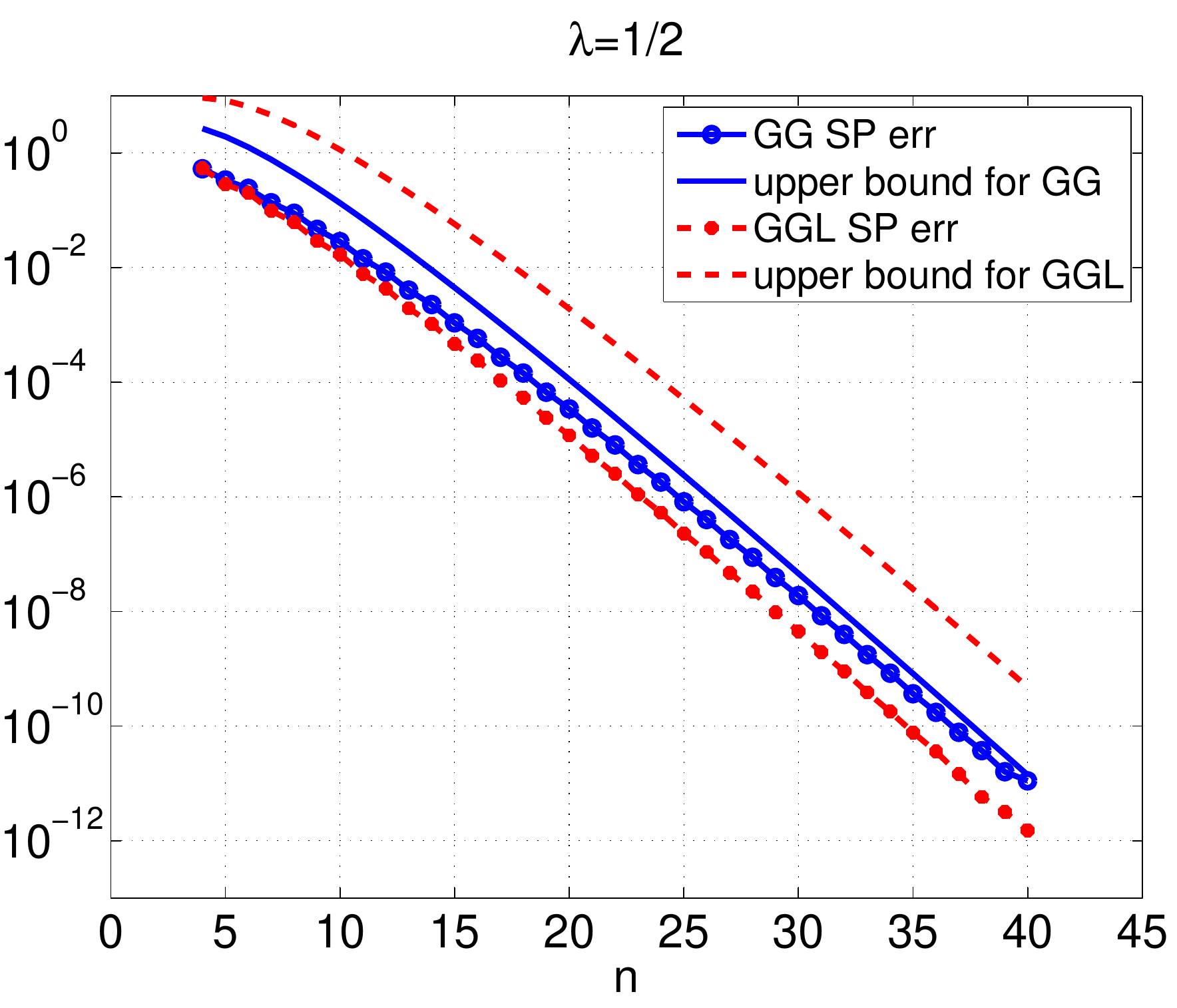}}
\end{minipage}}%
\subfigure[Upper bound vs Max.-error (Example 2)]{
\begin{minipage}[t]{0.47\textwidth}
\centering
\rotatebox[origin=cc]{-0}{\includegraphics[width=2.5in]{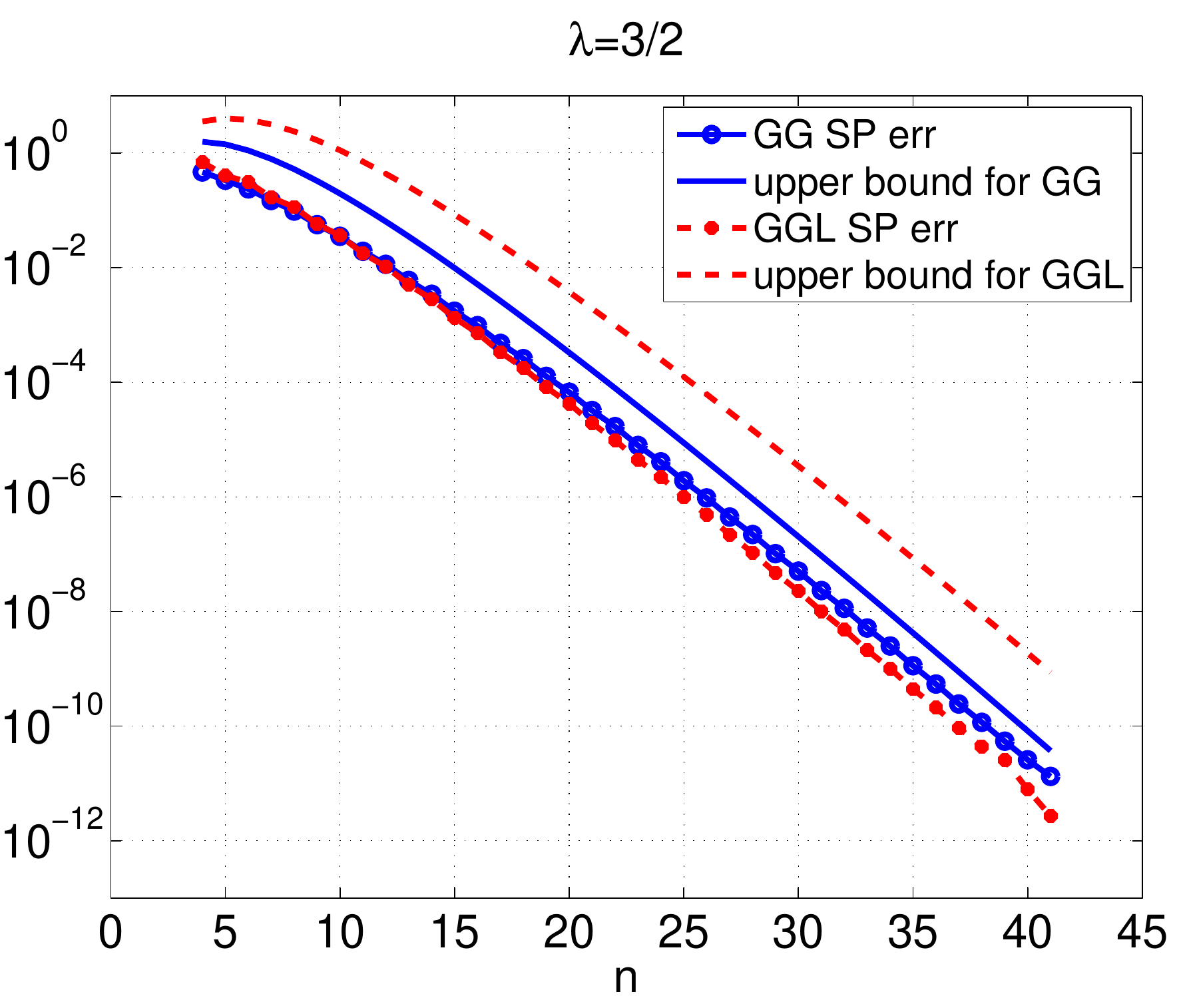}}
\end{minipage}}%

\caption{\small  The (discrete) maximum errors of Gegenbauer-Gauss and
Gegenbauer-Gauss-Lobatto spectral differentiation against the upper bounds
in  Theorems \ref{Th:spectraldiff} and \ref{theorem3.3} with $\lambda=1/2, 3/2.$
 Example 1 (a)-(b), and Example 2 (c)-(d). In the legend,
  GG SP err (resp. GGL SP err) represents the Gegenbauer-Gauss (resp.
  Gegenbauer-Gauss-Lobatto)
spectral differentiation error.} \label{figbnd2}
  \end{figure}

%\newpage
\vskip 20pt
 \noindent \underline{\large\bf Concluding remarks}
\vskip 10pt In this paper, the exponential convergence of Gegenbauer
interpolation, spectral differentiation and quadrature of functions
analytic on and within a sizable ellipse is analyzed. Sharp
estimates in the maximum norm with explicit dependence on the
important parameters are obtained.  Illustrative numerical results
are provided to support the analysis. For clarity of presentation,
it is assumed that  $\lambda$ is fixed in our analysis, but the
dependence of the error on  this parameter can also be tracked if it
is necessary.

\vskip 10pt
\noindent \underline{\large\bf Acknowledgement}
\vskip 5pt

The first author would like to thank the Division of Mathematical Sciences at Nanyang Technological University in Singapore for the hospitality during the visit.
The second author is grateful to Prof. Jie Shen at Purdue University for bringing this
topic to attention. The authors would also like to thank the anonymous referees for their valuable comments and suggestions.

\vskip 15pt

\begin{appendix}

\renewcommand{\theequation}{A.\arabic{equation}}

\section{Exponential convergence of Gegenbauer expansions}\label{AppendixAdd}

To this end, we discuss the exponential convergence of Gegenbauer
polynomial (with fixed  $\lambda$) expansions  of functions
satisfying the assumption of analyticity in
\cite{gottlieb1992gibbs}, that is, \eqref{Anconnet} in the form:
\begin{equation}\label{ShuGotform}
\max_{|x|\le 1}\Big|\frac{d^k u}{dx^k}(x)\Big|\le C(\delta)
\frac{k!}{\delta^k},\quad \delta\ge 1,\;\; \forall k\ge 0,
\end{equation}
where $C(\delta)$ is a positive constant depending only on $\delta.$
We write
\[
u(x)=\sum_{l=0}^\infty \hat u_l^\lambda C_l^\lambda(x)\;\;\; {\rm
with} \;\;\;  \hat u_l^\lambda =\frac 1 {h_l^\lambda} \int_{-1}^1
u(x)C_l^\lambda (x) (1-x^2)^{\lambda-1/2}dx,
\]
where $h_l^\lambda$ is defined in \eqref{gammafd}.
\begin{prop}\label{propadd1}  If $u(x)$ is an analytic function on $[-1,1]$ satisfying the assumption
\eqref{ShuGotform}, then for fixed $\lambda>-1/2,$ we have
\begin{equation}\label{modelmm}
\max_{|x|\le 1}\big|(\pi_n^\lambda u-u)(x)\big|\le
C(\delta,\lambda)\frac{n^\lambda}{(2\delta)^n},
\end{equation}
where $(\pi_n^\lambda u)(x)=\sum_{l=0}^n \hat u_l^\lambda
C_l^\lambda(x),$ and $C(\delta,\lambda)$ is a positive constant
depending  on $\delta$ and $\lambda.$
\end{prop}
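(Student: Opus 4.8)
The plan is to write the truncation error as the tail of the Gegenbauer series,
\[
(\pi_n^\lambda u-u)(x)=-\sum_{l=n+1}^\infty \hat u_l^\lambda\, C_l^\lambda(x),
\]
and to bound it by $\sum_{l>n}|\hat u_l^\lambda|\,|C_l^\lambda(x)|$, once a sharp decay rate for the coefficients $\hat u_l^\lambda$ is in hand and the pointwise bounds \eqref{Gegenmaxim} for $|C_l^\lambda(x)|$ are invoked. The whole argument thus splits into (a) estimating $|\hat u_l^\lambda|$, and (b) summing the resulting series.

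For (a), the natural device is the Rodrigues representation of the Gegenbauer polynomial (cf.\ \cite{szeg75}), which expresses $(1-x^2)^{\lambda-1/2}C_l^\lambda(x)$ as a known constant multiple $R_l^\lambda$ of $\frac{d^l}{dx^l}[(1-x^2)^{l+\lambda-1/2}]$. Substituting this into the integral defining $\hat u_l^\lambda$ and integrating by parts $l$ times transfers all derivatives onto $u$:
\[
\hat u_l^\lambda=\frac{(-1)^l R_l^\lambda}{h_l^\lambda}\int_{-1}^1 u^{(l)}(x)\,(1-x^2)^{l+\lambda-1/2}\,dx .
\]
The boundary terms vanish at each step precisely because $\lambda>-1/2$ makes the weight $(1-x^2)^{l+\lambda-1/2}$ together with its first $l-1$ derivatives vanish at $x=\pm1$. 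I would then apply the analyticity hypothesis \eqref{ShuGotform} to bound $|u^{(l)}(x)|\le C(\delta)\,l!/\delta^l$ and evaluate the remaining Beta integral $\int_{-1}^1(1-x^2)^{l+\lambda-1/2}\,dx=\sqrt\pi\,\Gamma(l+\lambda+\tfrac{1}{2})/\Gamma(l+\lambda+1)$, reducing $|\hat u_l^\lambda|$ to a product of Gamma functions times $\delta^{-l}$.

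The remaining work is algebraic. Inserting $h_l^\lambda$ from \eqref{gammafd}, the duplication formula \eqref{gammfunP1} collapses the constants so that the factor $2^{-l}$ carried by $R_l^\lambda$ combines with $\delta^{-l}$ into the crucial $(2\delta)^{-l}$, while Stirling's formula \eqref{stirling} handles the residual polynomial part. This should yield
\[
|\hat u_l^\lambda|\le \frac{C(\delta,\lambda)}{(2\delta)^l}\,\frac{(l+\lambda)\,\Gamma(l+1)}{\Gamma(l+\lambda+1)}\cong C(\delta,\lambda)\,\frac{l^{1-\lambda}}{(2\delta)^l}.
\]
Combining this with \eqref{Gegenmaxim} --- using $|C_l^\lambda(x)|\le C_l^\lambda(1)\cong l^{2\lambda-1}/\Gamma(2\lambda)$ when $\lambda>0$, and $|C_l^\lambda(x)|\le D_\lambda\,l^{\lambda-1}$ when $-1/2<\lambda<0$ --- makes the general term $|\hat u_l^\lambda|\,|C_l^\lambda(x)|$ of order $l^\lambda(2\delta)^{-l}$ in the first case. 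Since $\delta\ge1$ forces $2\delta\ge2>1$, the tail $\sum_{l>n}l^\lambda(2\delta)^{-l}$ is a convergent geometric-type series dominated by its leading term, giving the bound $C(\delta,\lambda)\,n^\lambda(2\delta)^{-n}$ of \eqref{modelmm}.

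I expect the principal obstacle to be the simultaneous bookkeeping of the three Gamma-factor packages --- the Rodrigues constant $R_l^\lambda$, the normalization $h_l^\lambda$, and the Beta integral --- and collapsing them, via \eqref{gammfunP1} and \eqref{stirling}, into exactly $(2\delta)^{-l}$ and the power $l^{1-\lambda}$ with all other $l$-dependence absorbed into $\lambda$-dependent constants; an error in any of these constants would corrupt the final exponent. A secondary point is the $\lambda$-uniform justification of the repeated integration by parts across the whole range $\lambda>-1/2$, together with the switch to the second estimate in \eqref{Gegenmaxim} in the regime $-1/2<\lambda<0$.
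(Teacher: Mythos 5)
Your route is genuinely different from the paper's. The paper's proof is a short reduction: it quotes Theorem 4.3 of \cite{gottlieb1992gibbs} verbatim, which already bounds the regularization error by
\[
A\,\frac{C(\delta)\,\Gamma(\lambda+1/2)\,\Gamma(n+2\lambda+1)}{n\sqrt{\lambda}\,(2\delta)^n\,\Gamma(2\lambda)\,\Gamma(n+\lambda)},
\]
and then uses Stirling's formula \eqref{stirling} to reduce the Gamma quotient to $n^{\lambda}$. What you propose is, in effect, to re-prove that cited theorem from scratch: Rodrigues' formula, $l$-fold integration by parts (your justification of the vanishing boundary terms for $\lambda>-1/2$ is correct), the Beta integral, and the collapse of the Gamma packages. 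That bookkeeping does work out exactly as you predict: the $\Gamma(l+2\lambda)$ in the Rodrigues constant cancels against the one in $h_l^\lambda$ from \eqref{gammafd}, the $\Gamma(l+\lambda+1/2)$ cancels against the Beta integral, and one is left with $|\hat u_l^\lambda|\le C(\delta,\lambda)\,(l+\lambda)\Gamma(l+1)\big/\big(\Gamma(l+\lambda+1)(2\delta)^l\big)\cong C(\delta,\lambda)\,l^{1-\lambda}(2\delta)^{-l}$. For $\lambda>0$, pairing this with $\max_{|x|\le 1}|C_l^\lambda(x)|=C_l^\lambda(1)\cong l^{2\lambda-1}/\Gamma(2\lambda)$ and summing the tail (using $2\delta\ge 2$) indeed yields \eqref{modelmm}. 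Your self-contained argument even has an advantage over the citation: the quoted bound carries a factor $1/\sqrt{\lambda}$ and was established for positive $\lambda$, so the paper's appeal to it is, strictly speaking, only licensed for $\lambda>0$, whereas your argument makes explicit what happens on the whole range $\lambda>-1/2$.

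However, there is a gap in the remaining case, and it is exactly the point you flag at the end but do not resolve. For $-1/2<\lambda<0$, your two ingredients give a general term of size $l^{1-\lambda}\cdot D_\lambda\, l^{\lambda-1}(2\delta)^{-l}=D_\lambda (2\delta)^{-l}$: the algebraic powers of $l$ cancel identically. The tail then sums to $C(\delta,\lambda)(2\delta)^{-n}$, not to $C(\delta,\lambda)\,n^{\lambda}(2\delta)^{-n}$; since $n^{\lambda}\to 0$ when $\lambda<0$, the bound \eqref{modelmm} you are asked to prove is strictly stronger than what you obtain, by the factor $n^{|\lambda|}$. Your concluding sentence sums the series only ``in the first case'' $\lambda>0$, so as written the proposal establishes the proposition for $\lambda>0$ and only a weaker (though still exponentially small) estimate for $-1/2<\lambda<0$. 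To be fair, the paper's own proof has the same soft spot, since it formally inserts $\lambda<0$ into a theorem whose statement involves $\sqrt{\lambda}$; but if you want \eqref{modelmm} as stated on all of $\lambda>-1/2$, you must either sharpen the coefficient or polynomial bound in the negative regime, or settle for $C(\delta,\lambda)(2\delta)^{-n}$ there.
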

 \begin{proof} The estimate follows directly from  Theorem 4.3 in \cite{gottlieb1992gibbs}.  Indeed, by Theorem 4.3 in  \cite{gottlieb1992gibbs}, we have
 \begin{equation*}
 \begin{split}
 \max_{|x|\le 1}\big|(\pi_n^\lambda u-u)(x)\big|\le A \frac{C(\delta)\Gamma(\lambda+1/2)\Gamma(n+2\lambda+1)}
 {n\sqrt \lambda (2\delta)^n \Gamma(2\lambda)\Gamma(n+\lambda)}, \quad n\ge 1,\; \lambda>-1/2,
 \end{split}
 \end{equation*}
 where  $A$ is a generic positive constant  independent of $\lambda, n$ and $u.$ For fixed $\lambda>-1/2,$ using
 \eqref{stirling} leads to
 %\begin{equation*}
% \frac{\Gamma(n+2\lambda+1)}{\Gamma(n+\lambda+1)}\le A (n+2\lambda)^\lambda
% \Big(1+\frac \lambda {n+\lambda}\Big)^{n+\lambda+1/2} e^{-\lambda}\le A (n+2\lambda)^\lambda e^{\frac{\lambda}{2(n+\lambda)}},
% \end{equation*}
\begin{equation*}
\begin{split}
 \frac{\Gamma(n+2\lambda+1)}{\Gamma(n+\lambda+1)}&= \frac{n+\lambda+1}{n+2\lambda+1}\frac{\Gamma(n+2\lambda+2)}{\Gamma(n+\lambda+2)}\\
 &\le A (n+2\lambda+1)^\lambda
 \Big(1+\frac \lambda {n+\lambda+1}\Big)^{n+\lambda+3/2} e^{-\lambda}\\
 &\le A (n+2\lambda+1)^\lambda e^{\frac{\lambda}{2(n+\lambda+1)}},
\end{split}
 \end{equation*}
 where in the last step, we used the inequality $1+x\le e^x$ for all real $x.$ Therefore, we obtain the estimate
 \begin{equation*}
 \begin{split}
 %\max_{|x|\le 1}\big|(\pi_n^\lambda u-u)(x)\big|\le C(\delta,\lambda) \frac{n+\lambda}{ n}\frac{(n+2\lambda)^\lambda}{(2\delta)^n}.
 \max_{|x|\le 1}\big|(\pi_n^\lambda u-u)(x)\big|\le C(\delta,\lambda) \frac{n+\lambda}{ n}\frac{(n+2\lambda+1)^\lambda}{(2\delta)^n}.
 \end{split}
 \end{equation*}
This implies \eqref{modelmm}.
\end{proof}

\begin{rem}\label{Augadd1} The error  $\max_{|x|\le 1}\big|(\pi_n^\lambda u-u)(x)\big|$ is termed as the regularization error in \cite{gottlieb1992gibbs,Got.S97}. This,  together with the so-called truncation error, contributes to the error of the Gegenbauer reconstruction. As shown in \cite{gottlieb1992gibbs}, an exponential convergence could be recovered from Fourier partial sum of a nonperiodic  analytic function, when $\lambda$ grows linearly with $n.$ However, the estimate \eqref{modelmm} shows that the regularization error alone decays exponentially  for fixed $\lambda$. \qed
\end{rem}

\begin{rem}\label{Augadd2}  It is also interesting to study the exponential convergence of Gegenbauer expansions of  analytic functions, which are analytic on and within the Bernstein ellipse ${\mathcal E}_\rho$ in \eqref{Berellips}.
We refer to
\cite{davis1975interpolation,rivlin1990chebyshev,MR1937591} for the
results on  Chebyshev expansions. However, the analysis for the
Legendre expansions is much more involved. Davis
\cite{davis1975interpolation}  stated an  estimate  due to K.
Neumann (see Page 312 of \cite{davis1975interpolation}), that is,
if $u$ is analytic on and inside ${\mathcal E}_\rho$ with $\rho>1,$
then the Legendre expansion coefficient satisfies
\begin{equation}\label{bestiim}
\underset{{n\to\infty}}{\lim\sup}\, |\hat u_n|^{1/n}=\rho^{-1}.
\end{equation}
A more informative estimate:
\begin{equation}\label{bestiimaa}
|\hat u_n|\le C(\rho) \frac n {\rho^{n+1}},\quad \forall n\ge 0,
\end{equation}
was presented in the very recent paper \cite{Xiang2011}. Some
discussions on the Gegenbauer expansions can be found in
\cite{Got.S97}.  We shall report the analysis for the  general
Jacobi expansions in \cite{WangXieZhao2011}.  \qed
\end{rem}

\vskip 10pt

\renewcommand{\theequation}{B.\arabic{equation}}

\section{Proof of Lemma \ref{lemma1.4}}\label{AppendixA}

 We carry out the proof by induction.

Apparently,  by \eqref{PropG3},  $C_0^{\lambda}(z)=1,$ so
\eqref{Gexp}-\eqref{ajexp0} holds for  $n=0.$ Similarly, we can
verify the case with $n=1.$

Assume that the formula holds for $C_{n-2}^{\lambda}(z)$ and
$C_{n-1}^{\lambda}(z)$ with $ n\ge 2$.  It follows from the
 three-term recurrence relation \eqref{PropG3} that
\begin{equation*}
\begin{split}
nC_n^{\lambda}(z)
&=2(n+\lambda-1)z\sum_{k=0}^{n-1}g_k^{\lambda}g_{n-1-k}^{\lambda}w^{n-2k-1}
\\&\quad
-(n+2\lambda-2)\sum_{k=0}^{n-2}g_k^{\lambda}g_{n-2-k}^{\lambda}w^{n-2k-2}\\
%\end{split}
%\end{equation*}
%\begin{equation*}
%\begin{split}
&=(n+\lambda-1)
\sum_{k=0}^{n-1}g_k^{\lambda}g_{n-1-k}^{\lambda}w^{n-2k}
+(n+\lambda-1)\sum_{k=0}^{n-1}g_k^{\lambda}g_{n-1-k}^{\lambda}w^{n-2k-2}\\
&\quad -(n+2\lambda-2)\sum_{k=0}^{n-2}g_k^{\lambda}g_{n-2-k}^{\lambda}w^{n-2k-2}\\
&=(n+\lambda-1)g_{n-1}^{\lambda} (w^n+w^{-n})+\sum_{k=1}^{n-1}
D_{n,k}^{\lambda}\, g_k^{\lambda} g_{n-k}^{\lambda} w^{n-2k},
\end{split}
\end{equation*}
where
\[
 D_{n,k}^{\lambda}=(n+\lambda-1)\left(\frac{g_{n-k-1}^{\lambda}}{g_{n-k}^{\lambda}}
 +\frac{g_{k-1}^{\lambda}}{g_{k}^{\lambda}}\right)-(n+2\lambda-2) \frac{g_{n-k-1}^{\lambda}}{g_{n-k}^{\lambda}}
 \frac{g_{k-1}^{\lambda}}{g_{k}^{\lambda}}.
\]
One verifies from \eqref{ajexp0} that
\begin{equation}\label{oneform}
{g_n^{\lambda}} =\frac{n+\lambda-1}{n}{g_{n-1}^{\lambda}},\quad
D_{n,k}^{\lambda}=n.
\end{equation}
This completes the induction. \qed

\end{appendix}

%\bibliography{referxie}
\bibliography{GegInterpAnal2010Dec}

\end{document}